\renewcommand{\@seccntformat}[1]{\bf\csname the#1\endcsname.}
\renewcommand{\section}{\@startsection{section}{1}
	\z@{.7\linespacing\@plus\linespacing}{.5\linespacing}
	{\normalfont\upshape\bfseries\centering}}
\renewcommand{\@biblabel}[1]{\@ifnotempty{#1}{#1.}}
\theoremstyle{plain}
\newtheorem{thm}{Theorem}[section]
\newtheorem{lem}[thm]{Lemma}
\newtheorem{prop}[thm]{Proposition}
\newtheorem{cor}[thm]{Corollary}
\theoremstyle{definition}
\newtheorem{ex}[thm]{Example}
\newtheorem{defn}[thm]{Definition}
\newtheorem{rem}{Remark}[section]
\def\E{{\mathcal E}}
\def\D{{\mathcal D}}
\def \>{\succ}
\def \<{\prec}
\def\D{{\mathcal D}}
\def\E{\mathcal{E}}
\begin{document}	
	\title[Bouzid Mosbahi\textsuperscript{1}, Sania Asif \textsuperscript{2}, Ahmed Zahari\textsuperscript{3*}]{classification of tridendriform algebra and related structures}
	\author{ Bouzid Mosbahi\textsuperscript{1}, Sania Asif \textsuperscript{2*}, Ahmed Zahari\textsuperscript{3*}}
\address{\textsuperscript{1}University of Sfax, Faculty of Sciences of Sfax,  BP 1171, 3000 Sfax, Tunisia.}
   	\address{\textsuperscript{2}School of Mathematics and Statistics, Nanjing University of information science and technology, Nanjing, Jianngsu Province, PR China.}
		\address{\textsuperscript{3}Universit\'{e} de Haute Alsace, IRIMAS-D\'{e}partement de Math\'{e}matiques, 18, rue des Fr\`eres Lumi\`ere F-68093 Mulhouse, France.}
\email{\textsuperscript{1}mosbahibouzid@yahoo.fr}
	\email{\textsuperscript{2*}11835037@zju.edu.cn}
\email{\textsuperscript{3*}zaharymaths@gmail.com}
	
	\keywords{ Rota-Baxter operator, tridendriform algebra, classification, 
 derivation, centroid, quasi-centroid}
	\subjclass[2010]{16Z05, 16D70, 17A60, 17B05, 17B40}
	
	\date{\today}
	\begin{abstract}  The classification of algebraic structures and their derivations is an important and ongoing research area in mathematics and physics, and various results have been obtained in this field. This article presents the classification of tridendriform algebras that was first studied by Loday and Ronco, including an analysis of structure constant equations using computer algebra software. We further explicitly classify the derivations and centroids of tridendriform algebras, showing that there are only trivial derivations for $2$- and $3$-dimensional algebras but $21$ non-isomorphic derivations for $4$-dimensional tridendriform algebras with dimension range from $1$ to $5$. Additionally, for centroids (centroid and quasi-centroid), there are trivial isomorphism classes for $2$ dimensional tridendriform algebra, $6$ non-isomorphic classes for $3$-dimensional tridendriform algebras and $21$ for $4$-dimensional algebras. The dimensions range for centroid is from $1$ to $5$, whereas it is  from $1$ to $10$ for quasi-centroid.
\end{abstract}\footnote{The second and third authors are the corresponding authors.}
\maketitle \section{Introduction}\label{introduction}
 The classification of algebras is an essential topic of research in mathematics that has applications in various fields. Its significance lies in its ability to organize and understand algebraic structures, to simplify the process of proving theorems, and to develop powerful computational methods for studying algebraic structures. The history of classifying algebras can be traced back to the early days of algebraic research  when mathematicians made the first attempts to classify algebras in the $18th$ century, and they studied the properties of polynomials to understand the structure of algebraic systems. Later on, the classification of the algebraic structure continued, and a French mathematician Evariste Galois developed the theory of field extensions in \cite{5}. Classifying simple groups was a significant milestone at that time. The classification of algebras has become an increasingly important topic of research in mathematics for several reasons. One of the main reasons is that the classification of algebras provides a framework for organizing and understanding algebraic structures. For example, in recent years, associative algebras and Hopf algebras have applications in quantum mechanics and quantum field theory see \cite{2,4,3,1}. Moreover, these classifications facilitate us in proving various theorems by reducing the number of cases to consider.
 \par Tridendriform algebra is an algebraic structure that arises in various fields of mathematics and physics. They are closely related to other algebraic structures such as dendriform algebras, associative algebra, $NS-$algebra, and many other operators such as averaging operator and Rota-Baxter operator \cite{6,7}. Due to the significant importance of classifying various algebraic structures in \cite{10,11,8,12} and being the hot topic, we are not only interested in evaluating the complete classification of tridendriform algebra but also interested in the classification of derivation and centroids of tridendriform algbera. A tridendriform algebra is a vector space $V$ over a field $F$ equipped with three bilinear multiplication operations $\<,\>,\vee$ known as right product, left product, and middle product, respectively satisfying particular identities. One of the first significant results in the classification of tridendriform algebras was obtained by Loday and Ronco in 2003. They showed that any finite-dimensional tridendriform algebra over a field of characteristic zero is either isomorphic to a tensor product of classical Lie algebras or the direct sum of a Lie algebra and a tridendriform algebra \cite{9}. Later, various other authors studied the classification of tridendriform algebras, focusing on different aspects. For example, Fiedorowicz and Loday in \cite{13} classified tridendriform algebras with trivial multiplication, and Stanislav Smirnov classified tridendriform algebras with quadratic multiplication. Additionally, various algebras' derivation and centroids concepts are essential growing topics to understand algebraic structures completely. Various authors have studied the derivations of algebras with different perspectives and have obtained valuable results in \cite{14,15,18,17,16}. Moreover, classification results on derivations of various algebras can be seen in \cite{23,20,22,25,21,19,26,24}.
 \par  Despite above mentioned research, no effort has been made to explicitly evaluate all the isomorphism classes of tridendriform algebra and its related structures. Motivated by the above existing research and the need to advance the literature, we investigate the classification of tridendriform algebra of dimension $\leq 4$ over the field of characteristic $0$. And we have evaluated all non-isomorphic derivations and centroids of tridendriform algebra for  the dimension $\leq 4$. We start this paper by presenting the general overview of tridendriform algebra and defining the derivation and centroids structures of tridendriform algebra in Section $2$. Next, in Section $3$, we introduce the new classification method based on the structure of  tridendriform algebra. We then show that any $3$-dimensional tridendriform algebra over a field of characteristic $0$ is isomorphic to one of $6$ possible tridendriform algebras, each characterized by a different structure. Afterwards, we  showed that  $4$-dimensional tridendriform algebra is isomorphic to one of the $21$ possible non-isomorphic tridendriform algebras. Our detailed analysis of the classification of tridendriform algebra  includes the evaluation of structure constant equations and obtaining results by the computer algebra software. Finally, in Section $4$, we explicitly provide the classification of derivations and centroids of tridendriform algebras. This section is divided into three subsections. In the first one, we observe that only trivial derivations exist for $2$ and $3$ dimensional tridendriform algebra. However, for the $4$-dimensional trdendriform algebra, we get $21$ non-isomorphic derivations, with dimensions ranging from $1$ to $8$. In other subsections, we classified centroids of trdendriform algebra. By centroids, we refer to both the centroid and quasi-centroid. We evaluate that centroid, and quasi centroid of $3$ dimensional tridendriform algebra are isomorphic to each $6$ non-isomorphic class with the dimensions ranging from $1$ to $3$ and $3$ to $5$ respectively. Likewise, classification of $4$-dimensional tridendriform algebra and derivation of tridendriform algebras, there exist $21$, non-isomorphic centroids of tridendriform algebra. Moreover, the dimension of centroid and  quasi-centroids are in the range of $1$ to$ 5$ and $1$ to $10 $, respectively.\par
Getting all the classification results by hand is tiresome and requires a lot of focus and energy. A little error in the computation yields wrong information and ultimately provides a poor foundation for mathematics. To avoid such human errors and extraction of reliable results, we performed our computation with the help of a computer and verified the results by hand. In particular, due to the powerful computational capacity to solve abstract algebra problems, we  use mathematical software (Mathematica) to compute our desired classification results. It offers several built-in functions for studying abstract algebraic structures. In particular, we use the Algebra package to get our results. Moreover, classification results obtained from old research are not easy to compare, but paper aims to provide an algorithmic classification, and the results obtained are easily comparable. The article provides a comprehensive and systematic treatment of the $4$-dimensional tridendriform algebras classification over a field of characteristic $0$.
Our results contribute to understanding the structure and properties of tridendriform algebra and pave the way for further research in this area. 
\section{Prelimieries}
\begin{defn}A tridendriform algebra is a quardruple $ (\E,\<,\>,\vee) $  consisting of three  multiplications  maps $\>,\<,\vee : \E \times \E \to \E$, which satisfy the following axioms:
	\begin{equation}
	\begin{aligned}&(a\<b)\<c =a\<(b*c),\\&(a\>b)\<c=a\>(b\<c),\\&(a*b)\>c =a\>(b\>c),\\&(a\>b)\vee c=a\>(b\vee c),\\&(a\<b)\vee c=a \vee (b\>c),\\&(a \vee b)\<c =a\vee(b\< c),\\&(a\vee b)\vee c=a\vee(b\vee c).\end{aligned}
	\end{equation}for $a,b,c \in \E$.
\end{defn}
\begin{defn}
 Let $(\E, \prec, \succ, \vee)$ and $(\E', \succ',\prec', \vee')$ be two tridendriform algebras. A linear map 
$\phi: \E\rightarrow \E'$ is a morphism of tridendriform algebra if the following identities hold
$$\phi \circ\prec=\prec'\circ( \phi\otimes\phi), \quad\phi \circ\succ =\succ'\circ( \phi\otimes\phi)\quad \;\; \mbox{and}\;\;\phi \circ\vee =\vee'\circ( \phi\otimes\phi).$$
\end{defn}
Tridendriform algebra has rich structural properties; we can get many other algebraic structures out of it. Tridendriform algebra also strongly relates to linear operators such as Rota-Baxter operators. As tridendriform algebra involves  three multiplication maps $\<,\>,\vee$, if any one or two maps are zero, we get dendriform and associative algebra, respectively. Moreover, the sum of three multiplication maps can yield an associative algebra structure. 
\begin{prop}
	If $(\E, \<,\>,\vee)$ is a tridendriform algebra, then $(\E, *)$ forms an associative algebra, where $*$ is defined by $a *b := a \> b + a\<b + a \vee b$ for all $ a, b \in \E. $
\end{prop}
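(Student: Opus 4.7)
The plan is to verify the associativity identity $(a*b)*c = a*(b*c)$ directly, by expanding both sides using the definition $x*y = x \succ y + x \prec y + x \vee y$ and matching the resulting terms via the seven tridendriform axioms. Expanding the left-hand side bilinearly produces nine summands of the form $(a \diamond_1 b) \diamond_2 c$ with $\diamond_1, \diamond_2 \in \{\succ, \prec, \vee\}$, and symmetrically the right-hand side produces nine summands of the form $a \diamond_1 (b \diamond_2 c)$. The task is then to show that these two nine-term sums are equal.

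Five of the axioms, namely those numbered 2, 4, 5, 6, and 7 in the displayed list, have the pure form $(a \diamond_1 b) \diamond_2 c = a \diamond_3 (b \diamond_4 c)$, and each one gives a bijective pairing between one summand on the left and one summand on the right. The remaining two axioms involve the combination $*$ and so absorb three terms at once. Axiom 1, namely $(a \prec b) \prec c = a \prec (b * c)$, matches the single summand $(a \prec b) \prec c$ on the left against the triple $a \prec (b \succ c) + a \prec (b \prec c) + a \prec (b \vee c)$ on the right. Axiom 3, namely $(a * b) \succ c = a \succ (b \succ c)$, matches the triple $(a \succ b) \succ c + (a \prec b) \succ c + (a \vee b) \succ c$ on the left against the single summand $a \succ (b \succ c)$ on the right. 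Together these account for $5 + 3 + 1 = 9$ summands on each side, with each of the seven axioms invoked exactly once.

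The main ``obstacle'' is therefore purely bookkeeping: one must ensure that every one of the nine terms on each side is assigned to precisely one axiom and that the triple-expansions arising from the axioms containing $*$ are not double-counted. Since the pairing above is clearly a bijection and no auxiliary identities are required, exhibiting this matching concludes the proof, and associativity of $*$ follows at once.
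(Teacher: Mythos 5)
Your proposal is correct and follows essentially the same route as the paper: both expand the associativity identity into nine summands per side and match them using the seven axioms, with axioms 1 and 3 (the ones involving $*$) absorbing three terms at once and the remaining five axioms pairing terms one-to-one. The bookkeeping you describe is exactly the computation carried out in the paper's proof.
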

\begin{proof} To prove $ (\E, *) $ is an associative algebra, we need to show the following associative identity
$ a* (b *c) = (a*b) * c $. For this, consider that
\begin{eqnarray*}\begin{aligned}&a* (b* c) \\&=a * (b \>  c + b \< c + b \vee c) \\&= a  \>  (b  \>  c + b \<c + b \vee c) + a \< (b\>  c + b \< c + b \vee c) + a \vee (b \> c + b \< c + b \vee c) \\&= a \>  (b \>  c) + a \> (b \<c) + a  \> (b \vee c) + a\< (b \>  c) + a \< (b \<c) + a \<(b \vee c) \\&+ a \vee (b \> c) +  a\vee (b \<c) + a \vee (b \vee c)\\&=(a \< b)\< c + (a \>  b) \< c + (a \< b + a \>  b + a\vee b) \> c
+ (a  \> b)\vee c + (a \vee b) \< c \\&+ (a \< b)\vee c + (a \vee b) \vee c \\&=(a \<b + a  \> b + a \vee b) \< c + (a \<b + a  \> b + a \vee  b) \> c
+ (a  \<b + a  \>  b + a \vee b) \vee c \\&=(a * b) *c.\end{aligned}\end{eqnarray*}This completes the proof.\end{proof}
\begin{defn}
Let $(A,\circ)$ be an associative algebra, a Rota-Baxter operator of weight $\theta$ on $A$ is a linear map $ R: A \to A,$ subjected to the following relation
$$R(a) \circ R(b) =R(R(a)\circ b + a\circ R(b) + \theta (a \circ  b)),$$
for all  $ a, b \in A $ and $\theta\in \mathbf{C}$. 
\end{defn} An algebra with the Rota-Baxter operator defined on it is called a Rota-Baxter algebra.
\begin{prop}
	Let $(\E, \circ, R)$ be an associative Rota-Baxter algebra. Let us define the operation $\>, \<, \vee$ on $ E $ by $ a \<b = a \circ R(b) $, $ a \> b = R(a)\circ b $ and $ a\vee b =\theta (a\circ  b)$, Then $(\E,\<,\>,\vee)$ becomes a  tridendriform algebra.
\end{prop}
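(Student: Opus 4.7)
The plan is to verify directly each of the seven tridendriform axioms in Definition 2.1, using only two ingredients: the associativity of $\circ$ on $A$, and the Rota-Baxter relation
\begin{equation*}
R(a)\circ R(b) \;=\; R\bigl(R(a)\circ b \,+\, a\circ R(b) \,+\, \theta(a\circ b)\bigr).
\end{equation*}
Once every axiom is checked for $a\prec b = a\circ R(b)$, $a\succ b = R(a)\circ b$, and $a\vee b = \theta(a\circ b)$, the tridendriform structure on $\mathcal{E}$ is established.

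Most of the seven axioms require essentially no work. Axiom (2), $(a\succ b)\prec c = a\succ(b\prec c)$, is an immediate associativity statement for the triple product $R(a)\circ b\circ R(c)$. Axioms (4), (5), (6), each of which features one $\vee$ on each side, reduce to pulling out the scalar $\theta$ and reassociating the underlying $\circ$-product; for instance, $(a\prec b)\vee c = \theta\bigl((a\circ R(b))\circ c\bigr) = \theta\bigl(a\circ(R(b)\circ c)\bigr) = a\vee(b\succ c)$. Axiom (7) is the single identity $\theta^{2}\bigl((a\circ b)\circ c\bigr) = \theta^{2}\bigl(a\circ(b\circ c)\bigr)$. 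I would dispatch these five axioms together with one sentence noting that they rely only on associativity of $\circ$ and $\bbbk$-linearity in the scalar $\theta$.

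The real content, and the only place where the Rota-Baxter relation is genuinely needed, lies in axioms (1) and (3), each of which contains two nested applications of $R$. For axiom (1), I would compute
\begin{equation*}
(a\prec b)\prec c = \bigl(a\circ R(b)\bigr)\circ R(c) = a\circ\bigl(R(b)\circ R(c)\bigr) = a\circ R\bigl(R(b)\circ c + b\circ R(c) + \theta(b\circ c)\bigr),
\end{equation*}
and then recognise the argument of the outer $R$ as exactly $b\succ c + b\prec c + b\vee c = b\ast c$, so the right-hand side collapses to $a\circ R(b\ast c) = a\prec(b\ast c)$. Axiom (3) is handled symmetrically: expand $a\ast b$ in $(a\ast b)\succ c = R(a\ast b)\circ c$, observe that $R(a\ast b) = R(a)\circ R(b)$ by the Rota-Baxter relation, and reassociate to obtain $R(a)\circ(R(b)\circ c) = a\succ(b\succ c)$. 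Apart from carefully matching the three summands produced by the Rota-Baxter identity with the three summands that define $\ast$, no obstacle is expected; this step-matching is the one place where a misalignment of signs or of the weight $\theta$ would spoil the argument, so it is where I would focus attention.
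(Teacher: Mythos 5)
Your proposal is correct and takes essentially the same route as the paper's proof: a direct verification of all seven axioms, in which the Rota--Baxter identity is invoked exactly for axioms (1) and (3) (via $R(b)\circ R(c)=R(b\ast c)$ and $R(a\ast b)=R(a)\circ R(b)$) while the remaining five follow from associativity of $\circ$ and linearity in $\theta$. The only difference is presentational: you group the trivial axioms together, whereas the paper writes out each of the seven computations explicitly.
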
\begin{proof}
We check the properties of tridendriform algebras one by one, as follows
\begin{enumerate}
 \item\begin{eqnarray*}
	\begin{aligned}
	(a \< b) \< c &= (a\circ R(b)) \circ R(c) = a\circ (R(b) \circ R(c))
	\\&= a \circ (R(R(b) \circ  c + b \circ R(c) + \theta (b \circ c)))
	\\&= a  \< (R(b) \circ c) + a \< (b \circ R(c)) + a \< \theta (b \circ c)\\& = a\< (b \> c) + a\< (b\< c) + a\< (b \vee c).\end{aligned}
	\end{eqnarray*}
	\item\begin{equation*}\begin{aligned}
	(a\>b)\< c &= (R(a) \circ b) \circ R (c) \\&= (R (a))\circ  (b \circ R(c)) \\&= a\>  (b \< c).
	\end{aligned}
	\end{equation*}	
	\item\begin{equation*}\begin{aligned}
	a \>(b \> c) &= R(a)\circ (R(b) \circ c) \\&= (R(a) \circ R(b)) \circ c \\&= R(R(a) \circ b) c + R(a \circ R(b)) c+ R(\theta (a \circ b)) \circ c
	\\&= (a \> b)\> c + (a \< b) \> c+ (a \vee b) \> c.
	\end{aligned}
	\end{equation*}
	\item
	\begin{equation*}(a \> b) \vee c = \theta((R(a) \circ b)\circ c) =\theta(R(a) \circ  (b \circ c)) = a\>  (b \vee c).
	\end{equation*}
	\item
	\begin{equation*}(a \< b) \vee c
	= \theta((a \circ R(b)) \circ c) =\theta(a \circ (R(b) \circ c)) = a\vee (b\>c).
	\end{equation*}
	\item\begin{equation*}(a \vee b) \< c = \theta (a \circ b) \circ R(c) = \theta (a \circ  (b \circ  R(c))) = a \vee (b\< c).
	\end{equation*}
	\item\begin{equation*}(a\vee b)\vee c = \theta^2
	((a \circ  b) \circ c) = \theta^2(a\circ (b \circ c)) = a\vee (b \vee c).
	\end{equation*}
\end{enumerate}
This completes the proof.  
\end{proof}
Let $\E$ be a tridendriform algebra and $A,~B$, be its subsets; then we can define the binary operation $*$ over subsets of $\E$, as follows:
$$A*B=A\>B+A\<B+A\vee B,$$ where
 \begin{center}
     $A\>B=span \{a\> b|~ a\in A, b\in B\}$, \\$ A\<B=span \{a\< b|~ a\in A, b\in B\}$, \\$ A\vee B=span \{a\vee b|~ a\in A, b\in B\}$.
 \end{center}
 Let $A \subseteq \E$ and $A\neq \emptyset$, the subset $$Z_{A}(\E)= \{a\in \E|a\circ S=S\circ a=0\}$$ is called centralizer of $A$ in $\E$.\\ Where the center of tridendriform algebra can be defined as$$Z(\E)= \{a\in \E|~a\circ s=s\circ a,~~ \forall s\in \E\}.$$
\begin{defn}
	Derivation of the tridendriform algebra $\E$ is a linear map $\D:\E\to \E $, such that the following identity holds for all $a,b \in \E$
	$$\D(a\circ b )= \D(a)\circ b + a\circ \D(b).$$ The set of all derivations of $\E$ are denoted by $Der(\E)$.
\end{defn}
\begin{defn}
 A linear map $\D:\E\to \E$ is called central derivation of $\E$, if for $a,b\in \E$, we have $$\D(a\circ b)= \D(a)\circ b= 0.$$ The set of all central derivation of $\E$ are denoted by $ZDer(\E)$.
\end{defn}Moreover, the central derivation is an ideal of the derivation of $\E$. It can be proved as follows.
\begin{prop}
	$ZDer(\E)$ is ideal of $Der(\E)$.
\end{prop}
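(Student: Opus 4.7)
The plan is to verify the two defining properties of an ideal: that $ZDer(\E)$ is a linear subspace of $Der(\E)$, and that it is stable under the commutator bracket $[D,\delta]=D\delta-\delta D$ for every $D\in Der(\E)$ and $\delta\in ZDer(\E)$. Closure under linear combinations is immediate from the linearity of each of the two conditions $\delta(a\circ b)=0$ and $\delta(a)\circ b=0$ together with the bilinearity of each product, so the real content is the bracket-closure statement, which must be checked uniformly for each of the three tridendriform multiplications $\prec,\succ,\vee$.

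Before touching the commutator, I would record a preliminary observation that will be used throughout: combining the central-derivation condition $\delta(a\circ b)=\delta(a)\circ b=0$ with the derivation identity $\delta(a\circ b)=\delta(a)\circ b+a\circ\delta(b)$ forces $a\circ\delta(b)=0$ as well. Thus a central derivation annihilates products from both sides, not just from the left as the definition literally states.

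With this in hand, fix $D\in Der(\E)$, $\delta\in ZDer(\E)$, set $\tau:=[D,\delta]$, and verify the central-derivation axiom in three pieces. \textbf{(a)} $\tau$ is a derivation with respect to each product $\circ$; this is the standard commutator-of-derivations computation, independent of the tridendriform structure. \textbf{(b)} $\tau(a)\circ b=0$: expand $\tau(a)\circ b=D(\delta(a))\circ b-\delta(D(a))\circ b$, observe that $\delta(D(a))\circ b=0$ by centrality of $\delta$, and obtain the vanishing of $D(\delta(a))\circ b$ by applying $D$ to the identity $\delta(a)\circ b=0$, using that $D$ is a derivation, and eliminating the residual term $\delta(a)\circ D(b)$ via the preliminary observation. \textbf{(c)} $\tau(a\circ b)=0$ follows at once from (a) and (b) via $\tau(a\circ b)=\tau(a)\circ b+a\circ\tau(b)$, the second summand being handled by the mirror computation of (b); the two-sided annihilation property of $\delta$ is precisely what makes this mirror argument go through.

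I do not anticipate any substantial obstacle; the only point demanding attention is that the identities must be repeated for each of the three products $\prec,\succ,\vee$ separately, and that the argument quietly relies on the two-sided form of the central-derivation condition rather than its literal one-sided statement.
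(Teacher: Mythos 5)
Your proposal is correct, and its core mechanism is the same as the paper's: show that the commutator of a derivation with a central derivation again kills everything it is supposed to kill, by exploiting the fact that $\delta(x)\circ y=0$ and $\delta(x\circ y)=0$ identically. The difference is one of completeness rather than of strategy. The paper's proof only computes $\D_1\D_2(a\circ b)=\D_1(\D_2(a)\circ b)+\D_1(a\circ\D_2(b))=0$ and asserts $\D_2\D_1(a\circ b)=0$ "similarly" (which is immediate, since $\D_1(a\circ b)=0$); it never verifies the second defining condition of a central derivation, namely $[\D_1,\D_2](a)\circ b=0$, nor that the commutator is itself a derivation. Your steps (a) and (b) supply exactly these missing pieces, and your preliminary observation --- that a central derivation which is also a derivation necessarily satisfies $a\circ\delta(b)=0$ as well --- is the right lemma to make the mirror computation in (c) and the elimination of cross terms in (b) go through. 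One small inaccuracy: in (b) the residual term $\delta(a)\circ D(b)$ dies by the left-annihilation condition $\delta(x)\circ y=0$ already contained in the definition, not by the two-sided observation; the two-sided form is genuinely needed only in the mirror half of (c). In short, your argument is a corrected and fully fleshed-out version of the paper's sketch, and the extra care is not superfluous --- it closes a real gap.
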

\begin{proof}To prove, $ZDer(\E)$ is ideal of $Der(\E)$, we first note that $ZDer(\E)$ is a subalgebra of $Der(\E)$. Let us prove the first case; another case can be proved similarly. For $ \D_1\in ZDer(\E)$ and $ \D_2\in Der(\E) $, we have
	\begin{eqnarray*}
	\D_1\D_2(a\circ b)= \D_1(\D_2(a)\circ b + a\circ \D_2(b))= \D_1(\D_2(a)\circ b )+ \D_1(a\circ \D_2(b))=0.
	\end{eqnarray*}
Similarly, for $ \D_2\D_1(a\circ b)=0. $ It completes the proof.\end{proof}
	\begin{defn}
	A linear map $\psi:\E\to \E$ is said to be the centroid of $\E$ if it satisfies the following identity $$\psi(a\circ b)= a\circ \psi(b)= \psi(a)\circ b$$ for all $a,b \in \E$. The set of all centroid of $\E$ is dented by $C(\E)$.
\end{defn} Note that $\circ$ denotes $\<,\>,\vee $ respectively. Moreover centroid of associative algebra is denoted by $$C(A)= C(\E)_\> \cap C(\E)_\< \cap C(\E)_\vee.$$
\begin{defn}
	A map $\psi:\E\to \E$ is considered quasi-centroid of $ \E $ if it satisfies the following identity $$ a\circ \psi(b)= \psi(a)\circ b$$ for all $a,b \in E$. The set of all quasi-centroid of $ \E $ is dented by $QC(\E)$.
\end{defn}
\begin{prop}
	Let $\E$ be a tridendriform algebra. Then we have the following results:\begin{enumerate}
		\item  $[Der(\E), C(\E)]\subseteq C(\E)$
		\item  $[Der(\E), QC(\E)]\subseteq QC(\E)$
	\end{enumerate}
\end{prop}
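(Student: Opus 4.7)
The plan is to unpack the commutator $[D,\psi] = D\psi - \psi D$ and verify the required identities by direct computation, applying the defining relations of derivation, centroid, and quasi-centroid to $a \circ b$ for each of the three products $\circ \in \{\prec,\succ,\vee\}$. Since the three products play completely symmetric roles in the definitions of $Der(\E)$, $C(\E)$, and $QC(\E)$, it suffices to carry out the computation once with a generic $\circ$.

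For part (1), I fix $D \in Der(\E)$ and $\psi \in C(\E)$ and expand $[D,\psi](a \circ b)$. The term $D\psi(a \circ b)$ becomes $D(\psi(a)\circ b) = D\psi(a)\circ b + \psi(a)\circ D(b)$ after using the centroid property once and then the derivation property. For the term $\psi D(a \circ b) = \psi(D(a)\circ b) + \psi(a \circ D(b))$ I would apply the centroid identity in its first form $\psi(x\circ y) = \psi(x)\circ y$ to the first summand and in its second form $\psi(x\circ y) = x\circ \psi(y)$ to the second summand. This produces $\psi D(a)\circ b + \psi(a)\circ D(b)$, and the middle terms cancel against each other, leaving $[D,\psi](a\circ b) = [D,\psi](a)\circ b$. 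A mirror-image choice of centroid identities yields $[D,\psi](a\circ b) = a\circ [D,\psi](b)$, which is exactly the centroid condition for $[D,\psi]$.

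For part (2), with $\psi \in QC(\E)$, only the single identity $a\circ \psi(b) = \psi(a)\circ b$ is available, so the strategy must be adapted. I would verify $a\circ [D,\psi](b) = [D,\psi](a)\circ b$ by expressing $a\circ D\psi(b)$ via the derivation rule: $a\circ D\psi(b) = D(a\circ \psi(b)) - D(a)\circ \psi(b)$. Using the quasi-centroid identity twice, $D(a\circ \psi(b)) = D(\psi(a)\circ b) = D\psi(a)\circ b + \psi(a)\circ D(b)$ and $D(a)\circ \psi(b) = \psi D(a)\circ b$. Similarly $a\circ \psi D(b) = \psi(a)\circ D(b)$ directly. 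Subtracting and watching the $\psi(a)\circ D(b)$ terms cancel delivers $a\circ [D,\psi](b) = [D,\psi](a)\circ b$, as required.

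The computation is routine; the only subtlety, and the main thing to get right, is the choice of which form of the centroid identity to apply to each summand so that the cross terms cancel. In part (2) the additional step of converting $a\circ D\psi(b)$ via the derivation rule is the essential move, since the quasi-centroid relation cannot be applied to $\psi(x\circ y)$ at all. I do not anticipate any nontrivial obstacle beyond careful bookkeeping.
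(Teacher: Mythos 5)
Your proposal is correct and follows essentially the same route as the paper: a direct expansion of the commutator in which the derivation rule and the (quasi-)centroid identities are applied so that the cross terms $\psi(a)\circ D(b)$ cancel, the only cosmetic difference being that you start from $[D,\psi](a\circ b)$ while the paper starts from $[D,\psi](a)\circ b$ and works toward the same equality. One harmless slip: applying the form $\psi(x\circ y)=x\circ\psi(y)$ to $\psi(a\circ D(b))$ yields $a\circ\psi D(b)$ rather than $\psi(a)\circ D(b)$, but since the centroid satisfies both forms these expressions coincide and the cancellation goes through either way.
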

\begin{proof}
\begin{enumerate}
	\item Suppose that $ \D_1\in Der(\E) $, $ \psi_2\in C(\E) $, for $ a, b \in \E$, we have
	\begin{eqnarray*}
	[\D_1\psi_2(a), b]= \D_1([\psi_2(a), b])-[\psi_2(a), \D_1(b)]= \D_1\psi_2([a, b])-\psi_2[a, \D_1(b)]
	\end{eqnarray*}
	\begin{eqnarray*}
	[\psi_2\D_1(a), b]=\psi_2[\D_1(a), b]= \psi_2(\D_1([a, b])-[a, \D_1(b)])= \psi_2\D_1([a, b])-\psi_2[a, \D_1(b)].
	\end{eqnarray*} So we have $ [[\D_1,\psi_2](a), b]=  [\D_1,\psi_2]([a, b]) $,
	thus $$[\D_1,\psi_2]\in C(\E).$$
	\item Suppose that $ \D_1\in Der(\E) $, $\psi_2\in QC(\E) $, for $ a, b \in \E$, we have
	\begin{eqnarray*}
	[\D_1\psi_2(a), b]= \D_1([\psi_2(a), b])-[\psi_2(a), \D_1(b)]= \D_1([a, \psi_2(b)])-[a, \psi_2\D_1(b)]
	\end{eqnarray*}
	\begin{eqnarray*}
	[\psi_2\D_1(a), b]=[\D_1(a), \psi_2(b)]= \D_1([a, \psi_2(b)])-[a, \D_1\psi_2(b)].
	\end{eqnarray*} So we have $ [[\D_1,\psi_2](a), b]=  [a, [\D_1,\psi_2](b)] $,
	thus $$[\D_1,\psi_2]\in QC(E).$$
\end{enumerate} This completes the proof.
	\end{proof}
\section{Classification of tridendriform algebra}
This section describes the classification of tridendriform algebras ofdimension $\leq 4$ over the field $\mathbb{F}$ of characteristic $0$.
Let $\left\{e_1,e_2, e_3,\cdots, e_n\right\}$ be a basis of an $n$-dimensional tridendriform algebras $\E$. The product of basis  can be expressed in terms of structure constants as follows
$$e_i\prec  e_j=\sum_{k=1}^n\gamma_{ij}^ke_k\quad ;\quad e_i\succ e_j=\sum_{k=1}^n\delta_{ij}^ke_k\quad ;\quad  e_i\vee e_j=\sum_{k=1}^n \xi_{ij}^ke_k.$$ where, the matrix  $(\gamma_{ij}^k)$, $(\delta_{ij}^k)$ and $(\xi_{ij}^k)$ stand for the structure constants of $ \E $.
Now evaluating the structure constant equations for the tridendriform algebras, we get
\begin{eqnarray}
\sum_{p=1}^n(\gamma_{ij}^p\gamma_{pk}^q-\gamma_{jk}^p\gamma_{ip}^q-\delta_{jk}^p\gamma_{ip}^q-\xi_{jk}^p\gamma_{ip}^q)&=&0,\\
\sum_{p=1}^n(\delta_{ij}^p\gamma_{pk}^q-\gamma_{jk}^p\delta_{ip}^q)&=&0,\\
\sum_{p=1}^n(\gamma_{ij}^p\delta_{pk}^q+\delta_{ij}^p\delta_{pk}^q+\xi_{ij}^p\delta_{pk}^q-\delta_{jk}^p\delta_{ip}^q)&=&0,\\
\sum_{p=1}^n(\delta_{ij}^p\xi_{pk}^q-\xi_{jk}^p\delta_{ip}^q)&=&0,\\
\sum_{p=1}^n(\gamma_{ij}^p\xi_{pk}^q-\delta_{jk}^p\xi_{ip}^q)&=&0,\\
\sum_{p=1}^n(\xi_{ij}^p\gamma_{pk}^q-\delta_{jk}^p\xi_{ip}^q)&=&0,\\
\sum_{p=1}^n(\xi_{ij}^p\xi_{pk}^q-\xi_{jk}^p\xi_{ip}^q)&=&0.
\end{eqnarray}
 \begin{ex}
A complex  tridendriform algebra structure on a $2$-dimensional vector space $\E$ with the basis $\left\{e_1, e_2\right\}$ can be obtained by defining the following products.
$$e_2\prec e_2=e_1, \quad e_2\succ e_2=e_1,\quad e_2\vee e_2=e_1.$$
\end{ex}	

\begin{thm}
 Any $ 3 $-dimensional  complex tridendriform algebra either is associative or isomorphic to one of the following pairwise non-isomorphic tridendriform algebra :\\

$\mathcal{DT}_3^1 :
\begin{array}{ll}  
e_1\prec e_1=e_2,\\
e_1\prec e_3=e_2,\\
e_3\prec e_1=e_2,
\end{array}\quad
\begin{array}{ll}  
e_1\succ e_3=e_2,\\
e_3\succ e_1=e_2,\\
e_3\succ e_3=e_2,
\end{array}\quad
\begin{array}{ll}  
e_1\vee e_1=e_2,\\
e_1\vee e_3=e_2,\\
e_3\vee e_3=e_2.
\end{array}$

$\mathcal{DT}_3^2 :
\begin{array}{ll}  
e_1\prec e_1=e_2,\\
e_1\prec e_3=e_2,\\
e_3\prec e_1=e_2,
\end{array}\quad
\begin{array}{ll}  
e_3\prec e_3=e_2,\\
e_3\succ e_1=e_2,\\
e_3\succ e_3=e_2,
\end{array}\quad
\begin{array}{ll}  
e_1\vee e_3=e_2,\\
e_3\vee e_1=e_2,\\
e_3\vee e_3=e_2.
\end{array}$

$\mathcal{DT}_3^3 :
\begin{array}{ll}  
e_1\prec e_2=ae_3,\\
e_2\prec e_1=e_3,\\
e_2\prec e_2=be_3,
\end{array}\quad
\begin{array}{ll}
e_2\succ e_1=ce_3,\\  
e_2\succ e_2=e_3,
\end{array}\quad
\begin{array}{ll}  
e_1\vee e_2=e_3,\\
e_2\vee e_2=de_3.
\end{array}$

$
\mathcal{DT}_3^4 :
\begin{array}{ll}  
e_1\prec e_2=e_3,\\
e_2\prec e_1=e_3,\\
e_2\prec e_2=e_3,
\end{array}\quad
\begin{array}{ll}  
e_1\succ e_2=e_3,\\
e_2\succ e_1=e_3,
\end{array}\quad
\begin{array}{ll}
e_2\succ e_2=e_3,\\  
e_2\vee e_2=e_3.
\end{array}$

$\mathcal{DT}_3^5 :
\begin{array}{ll}  
e_1\prec e_1=e_3,\\
e_1\prec e_2=e_3,\\
e_2\prec e_1=e_3,
\end{array}\quad
\begin{array}{ll}  
e_1\succ e_1=e_3,\\
e_2\succ e_1=e_3,\\
e_2\succ e_2=e_3,
\end{array}\quad
\begin{array}{ll}
e_2\vee e_1=e_3,\\ 
e_2\vee e_2=e_3.
\end{array}$

$\mathcal{DT}_3^6 :
\begin{array}{ll}  
e_1\prec e_1=e_3,\\
e_1\prec e_2=ae_3,\\
e_2\prec e_1=e_3,
\end{array}\quad
\begin{array}{ll} 
e_2\prec e_2=-e_3,\\ 
e_1\succ e_1=be_3,\\
e_1\succ e_2=ce_3,
\end{array}\quad
\begin{array}{ll}
e_2\succ e_2=e_3,\\
e_1\vee e_1=-ce_3,\\ 
e_2\vee e_2=e_3.
\end{array}$

\end{thm}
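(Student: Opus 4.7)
The plan is to pursue a direct structure-constant attack combined with a change-of-basis normalization, exactly following the algorithmic template set up in the preliminaries. Fix a basis $\{e_1,e_2,e_3\}$ of $\E$ and introduce the $81$ unknowns $\gamma_{ij}^k,\delta_{ij}^k,\xi_{ij}^k$ via
\begin{equation*}
e_i\prec e_j=\sum_k\gamma_{ij}^k e_k,\qquad e_i\succ e_j=\sum_k\delta_{ij}^k e_k,\qquad e_i\vee e_j=\sum_k\xi_{ij}^k e_k.
\end{equation*}
Substituting these expressions into each of the seven tridendriform axioms produces the polynomial system (2)--(8) with $n=3$ and $i,j,k,q\in\{1,2,3\}$, a finite (but large) system of cubic equations in the structure constants. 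The first step is to feed this system to Mathematica (the Algebra package indicated in the introduction) and extract all solution families.

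The second step is to factor out the action of $GL_3(\mathbb{F})$. Any invertible change of basis $e_i'=\sum_j g_{ij}e_j$ transforms the structure constants by the standard rule $(\gamma')_{ij}^k=\sum_{p,q,r}g_{ip}g_{jq}(g^{-1})_{rk}\gamma_{pq}^r$ and similarly for $\delta,\xi$. Using this transformation, I would normalize each solution family into a canonical form: typical reductions include scaling a distinguished nonzero product to a basis vector $e_k$, and using linear shifts $e_i\mapsto e_i+\alpha e_j$ to kill off parameters whose vanishing can be arranged generically. After this normalization one obtains a finite list of representatives, possibly depending on residual parameters $a,b,c,d$ that cannot be absorbed into the basis change (these will become the parametric families appearing in $\mathcal{DT}_3^3$ and $\mathcal{DT}_3^6$).

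The third step is to discard the purely associative representatives. By Proposition (the one proving that $(\E,*)$ is associative), each tridendriform algebra produces a companion associative algebra; the representatives one should exclude from the list are those in which the individual operations $\prec,\succ,\vee$ can all be recovered from a single associative product, i.e.\ where the decomposition into three operations is trivial/degenerate. Comparing each remaining normalized representative against the $3$-dimensional complex associative algebras rules out these cases, leaving exactly the six families $\mathcal{DT}_3^1,\ldots,\mathcal{DT}_3^6$.

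The main obstacle will be the final pairwise non-isomorphism step. Once the six candidates are on the table, one must verify that no invertible $3\times 3$ matrix $g$ simultaneously intertwines the triples $(\prec,\succ,\vee)$ of one candidate with those of another, and also that distinct parameter values within $\mathcal{DT}_3^3$ or $\mathcal{DT}_3^6$ give non-isomorphic algebras (or, if they don't, that the parameters have been quotiented properly). The cleanest way is to attach isomorphism invariants: the dimensions of the images of each of the three multiplications, the annihilators $\{x:x\prec\E=0\}$ etc., and the rank profile of the bilinear forms restricted to suitable subspaces. These invariants should separate the six families, and for the parametric families one checks which invariants depend on the parameters to conclude non-isomorphism. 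Everything else is bookkeeping of the polynomial solutions, which is precisely what the computer algebra system handles.
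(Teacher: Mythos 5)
Your plan is sound in outline and shares the paper's basic engine --- write everything in structure constants and let a computer algebra system solve the polynomial system (2)--(8) --- but it is organized quite differently from what the paper actually does. The paper does not solve the full $81$-unknown system and then quotient by $GL_3(\mathbb{F})$. Instead, for each case it \emph{fixes} a specific multiplication table for $\prec$ (e.g.\ $e_1\prec e_1=e_1\prec e_2=e_2\prec e_1=e_3$ for the case leading to $\mathcal{DT}_3^5$), introduces unknowns only for $\succ$ and $\vee$ ($54$ constants), imposes the seven axioms, and reads off the unique completion; the $\mathcal{DT}_3^i$ are then the resulting tables. This anchoring makes each individual computation far more tractable than your global solve, whose second step --- normalizing an arbitrary solution family under the $GL_3$ action --- is by a wide margin the hardest part of the whole problem and is the step you describe least concretely (``typical reductions include scaling\ldots'' is not yet an algorithm). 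On the other hand, your proposal explicitly confronts two issues the paper's proof silently skips: where the initial list of $\prec$-tables comes from (the paper never says, which is a genuine gap on its side), and the verification that the six families are pairwise non-isomorphic, including the question of which parameter values of $a,b,c,d$ in $\mathcal{DT}_3^3$ and $\mathcal{DT}_3^6$ yield isomorphic algebras. Your suggestion to separate classes by invariants (dimensions of $\E\prec\E$, $\E\succ\E$, $\E\vee\E$, annihilators, rank profiles) is the right tool for that and is absent from the paper. So: same computational philosophy, different decomposition of the work; the paper's route is easier to execute case by case but rests on an unexplained starting list and omits the non-isomorphism check, while yours is logically more self-contained but leaves the $GL_3$-orbit reduction --- the real bottleneck --- at the level of intention rather than method.
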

\begin{proof}
We give the proof of one case.
Suppose that tridendriform algebra $\mathcal{A}=(TDend, \prec)$ has the following 
multiplication table : 
$$\begin{array}{ll}
e_1 \prec e_1=e_3,\quad 
e_1 \prec e_2=e_3,\quad 
e_2 \prec e_1=e_3.
\end{array}$$
We define $\mathcal{B}=(TDend,\succ,\vee)$ by the multiplication table : 

$$\begin{array}{ll}  
e_1 \succ e_1 = a_1e_1+a_2e_2+a_3e_3,\\
e_1 \succ e_2= a_4e_1+a_5e_2+a_6e_3,\\
e_1 \succ e_3 = a_7e_1+a_8e_2+a_9e_3,\\
e_2 \succ e_1 = b_{1}e_1+b_{2}e_2+b_{3}e_3,\\
e_2 \succ e_2 = b_{4}e_1+b_{5}e_2+b_{6}e_3,\\
e_2 \succ e_3 = b_{7}e_1+b_{8}e_2+b_{9}e_3,\\
e_3\succ e_1 = c_{1}e_1+c_{2}e_2+c_{3}e_3,\\
e_3\succ e_2 = c_{4}e_1+c_{5}e_2+c_{6}e_3,\\
e_3\succ e_3 = c_{7}e_1+c_{8}e_2+c_{9}e_3,\\
\end{array}\quad\quad
\begin{array}{ll}  
e_1 \vee e_1 = x_1e_1+x_2e_2+x_3e_3,\\
e_1 \vee e_2= x_4e_1+x_5e_2+x_6e_3,\\
e_1 \vee e_3 = x_7e_1+x_8e_2+x_9e_3,\\
e_2 \vee e_1 = y_{1}e_1+y_{2}e_2+y_{3}e_3,\\
e_2 \vee e_2 = y_{4}e_1+y_{5}e_2+y_{6}e_3,\\
e_2 \vee e_3 = y_{7}e_1+y_{8}e_2+y_{9}e_3,\\
e_3\vee e_1 = z_{1}e_1+z_{2}e_2+z_{3}e_3,\\
e_3\vee e_2 = z_{4}e_1+z_{5}e_2+z_{6}e_3,\\
e_3\vee e_3 = z_{7}e_1+z_{8}e_2+z_{9}e_3,
\end{array}\quad$$

where $a_i,b_i,c_i,x_i,y_i$ and $z_i$
are unknowns $(i =1,2,...,9).$
Verifying the tridendriform algebras axioms, we get the following 
constraints for the structure  :
\begin{align*}
a_1=a_2=a_4=a_5=a_6=a_7=a_8=a_9=0,\\
b_1=b_2=b_4=b_5=b_6=b_7=b_8=b_9=0,\\
c_1=c_2=c_3=c_4=c_5=c_6=c_7=c_8=c_9=0,\\
x_1=x_2=x_3=x_4=x_5=x_6=x_7=x_8=x_9=0,\\
y_1=y_2=y_4=y_5=y_7=y_8=y_9=0,\\
z_1=z_2=z_3=z_4=z_5=z_6=z_7=z_8=z_9=0,
\end{align*}
and
\begin{align*}
a_3=b_3=b_6=y_3=y_6=1.
\end{align*}
We find the table of multiplication as follows :

$$\begin{array}{ll}  
e_1 \prec e_1 = e_3,\\
e_1 \prec e_2= e_3,\\
e_2 \prec e_1= e_3,
\end{array}\quad\quad
\begin{array}{ll}  
e_1 \succ e_1= e_3,\\
e_2 \succ e_1 = e_3,\\
e_2 \succ e_2 = e_3,
\end{array}\quad\quad
\begin{array}{ll}  
e_2 \vee e_1 = e_3,\\
e_2 \vee e_2 = e_3.
\end{array}$$

This is the tridendriform algebra $\mathcal{DT}_3^5$.
Similar observations can be applied for the other cases. 
\end{proof}
\begin{thm}
 Any 4-dimensional complex tridendriform algebra either is associative or isomorphic to one of the following pairwise non-isomorphic tridendriform algebra : \\

$\mathcal{DT}_4^1 :
\begin{array}{ll}  
e_1\prec e_1=e_1,\\
e_1\succ e_2=e_2,
\end{array}\quad
\begin{array}{ll}  
e_3\vee e_1=e_3+e_4,\\
e_3\vee e_3=e_3+e_4,
\end{array}\quad
\begin{array}{ll}  
e_3\vee e_4=e_3+e_4,\\
e_4\vee e_4=e_3+e_4.
\end{array}$\\

$\mathcal{DT}_4^2 :
\begin{array}{ll}  
e_2\prec e_1=e_3,\\
e_4\succ e_1=e_3
\end{array}\quad 
\begin{array}{ll}  
e_1\vee e_2=e_3+e_4,\\
e_2\vee e_1=e_2,
\end{array}\quad e_4\vee e_4=e_3+e_4.$

$
\mathcal{DT}_4^3 :
\begin{array}{ll}  
e_1\prec e_2=e_3+ae_4,\\
e_2\prec e_1=e_3+e_4,\\
e_2\prec e_2=be_3-ae_4,
\end{array}\quad
\begin{array}{ll}  
e_1\succ e_2=e_3,\\
e_2\succ e_1=be_3,\\
e_2\succ e_2=e_3,
\end{array}\quad
\begin{array}{ll}  
e_1\vee e_2=e_4,\\
e_2\vee e_1=e_4,\\
e_2\vee e_2=de_4.
\end{array}$

$\mathcal{DT}_4^4 :
\begin{array}{ll}  
e_1\prec e_2=e_3+e_4,\\
e_2\prec e_1=e_3+e_4,\\
e_2\prec e_2=e_3+e_4,
\end{array}\quad
\begin{array}{ll}  
e_1\succ e_2=e_3,\\
e_2\succ e_1=e_3,\\
e_2\succ e_2=e_3,
\end{array}\quad
\begin{array}{ll}  
e_1\vee e_2=e_3+e_4,\\
e_2\vee e_1=e_3+e_4,\\
e_2\vee e_2=e_3+e_4.
\end{array}$

$
\mathcal{DT}_4^5 :
\begin{array}{ll}  
e_1\prec e_3=e_2+e_4,\\
e_3\prec e_1=e_2+e_4,
\end{array}\quad
\begin{array}{ll} 
e_3\succ e_1=e_2+e_4,\\
e_3\succ e_3=e_2+e_4,
\end{array}\quad
\begin{array}{ll} 
e_3\vee e_3=e_2+e_4.
\end{array}$

$
\mathcal{DT}_4^6 :
\begin{array}{ll}  
e_1\prec e_3=e_2+e_4,\\
e_3\prec e_1=e_2+e_4,\\
e_3\prec e_3=e_2+e_4,\\
\end{array}\quad
\begin{array}{ll} 
e_1\succ e_3=e_2+e_4,\\
e_3\succ e_3=e_2+e_4,
\end{array}\quad
\begin{array}{ll} 
e_3\vee e_1=e_2+e_4,\\
e_3\vee e_3=e_2+e_4.
\end{array}$

$\mathcal{DT}_4^7 :
\begin{array}{ll}  
e_1\prec e_3=e_2+e_4,\\
e_3\prec e_1=e_2+e_4,\\
e_3\prec e_3=e_2+e_4,\\
\end{array}\quad
\begin{array}{ll} 
e_1\succ e_3=e_2+e_4,\\
e_3\succ e_1=e_2,\\
e_3\succ e_3=e_2+e_4,\\
\end{array}\quad
\begin{array}{ll} 
e_1\vee e_3=e_2+e_4,\\
e_3\vee e_1=e_2+e_4,\\
e_3\vee e_3=e_4.
\end{array}$

$\mathcal{DT}_4^8 :
\begin{array}{ll}  
e_1\prec e_1=e_2,\\
e_1\prec e_3=e_2,
\end{array}\quad
\begin{array}{ll} 
e_3\prec e_1=e_2,\\
e_1\succ e_3=e_2,\
\end{array}\quad
\begin{array}{ll}  
e_3\succ e_1=e_2,\\
e_1\vee e_1=e_2,
\end{array}\quad
\begin{array}{ll} 
e_3\vee e_1=e_2,\\
e_3\vee e_3=e_2.
\end{array}$

$\mathcal{DT}_4^{9} :
\begin{array}{ll}  
e_1\prec e_1=e_2,\\
e_1\prec e_3=ae_2,\\
e_3\prec e_3=-ae_2,\\
\end{array}\quad
\begin{array}{ll} 
e_4\prec e_4=e_2,\\
e_3\succ e_1=be_2,\\ 
e_3\succ e_3=e_2,
\end{array}\quad
\begin{array}{ll} 
e_4\succ e_4=ce_2,\\
e_1\vee e_1=e_2,\\ 
e_4\vee e_4=de_2.
\end{array}$

$\mathcal{DT}_4^{10} :
\begin{array}{ll}  
e_1\prec e_3=e_2,\\
e_3\prec e_1=e_2,\\
e_4\prec e_4=e_2,\\
\end{array}\quad
\begin{array}{ll} 
e_3\succ e_1=e_2,\\
e_3\succ e_3=e_2,\\ 
e_4\succ e_4=e_2,
\end{array}\quad
\begin{array}{ll} 
e_1\vee e_1=e_2,\\
e_3\vee e_3=e_2,\\ 
e_4\vee e_4=e_2.
\end{array}$

$\mathcal{DT}_4^{11} :
\begin{array}{ll}  
e_1\prec e_2=e_3,\\
e_2\prec e_1=e_3,\\
e_2\prec e_2=e_3+e_4,\\
\end{array}\quad
\begin{array}{ll} 
e_1\succ e_2=e_3,\\
e_2\succ e_1=e_3,\\ 
e_2\succ e_2=e_3+e_4,
\end{array}\quad
\begin{array}{ll} 
e_\vee e_2=e_3,\\
e_2\vee e_1=e_3,\\ 
e_2\vee e_2=e_3+e_4.
\end{array}$

$\mathcal{DT}_4^{12} :
\begin{array}{ll}  
e_1\prec e_1=e_4,\\
e_2\prec e_1=e_4,\\
e_2\prec e_3=e_4,
\end{array}\quad
\begin{array}{ll} 
e_1\succ e_2=e_4,\\
e_2\succ e_2=e_4,\\
e_3\succ e_2=e_4,
\end{array}\quad
\begin{array}{ll} 
e_1\vee e_1=e_4,\\
e_2\vee e_2=e_4,\\
e_3\vee e_3=e_4.
\end{array}$

$\mathcal{DT}_4^{13} :
\begin{array}{ll}  
e_2\prec e_2=e_4,\\
e_2\prec e_3=e_4,\\
e_3\prec e_1=e_4,
\end{array}\quad
\begin{array}{ll} 
e_1\succ e_2=e_4,\\
e_3\succ e_1=e_4,\\
e_3\succ e_2=e_4,
\end{array}\quad
\begin{array}{ll} 
e_2\vee e_3=e_4,\\
e_3\vee e_1=e_4,\\
e_3\vee e_3=e_4.
\end{array}$

$\mathcal{DT}_4^{14} :
\begin{array}{ll}  
e_2\prec e_3=ae_4,\\
e_3\prec e_2=e_4,\\
e_3\prec e_3=-ae_4,
\end{array}\quad
\begin{array}{ll} 
e_3\succ e_1=be_4,\\
e_3\succ e_2=e_4,\\
e_3\succ e_3=e_4,
\end{array}\quad
\begin{array}{ll} 
e_1\vee e_1=ae_4,\\
e_3\vee e_1=e_4,\\
e_3\vee e_3=-ae_4.
\end{array}$

$\mathcal{DT}_4^{15} :
\begin{array}{ll}  
e_1\prec e_2=e_4,\\
e_3\prec e_2=e_4,\\
e_3\prec e_3=e_4,
\end{array}\quad
\begin{array}{ll} 
e_1\succ e_1=e_4,\\
e_2\succ e_3=e_4,\\
e_3\succ e_1=e_4,
\end{array}\quad
\begin{array}{ll} 
e_2\vee e_2=e_4,\\
e_2\vee e_3=e_4,\\
e_3\vee e_3=e_4.
\end{array}$

$\mathcal{DT}_4^{16} :
\begin{array}{ll}  
e_2\prec e_2=e_1,\\
e_2\prec e_3=e_1,\\
e_2\prec e_4=e_1,
\end{array}\quad
\begin{array}{ll} 
e_2\succ e_2=e_1,\\
e_3\succ e_2=e_1,\\
e_3\succ e_4=e_1,
\end{array}\quad
\begin{array}{ll} 
e_2\vee e_2=e_1,\\
e_3\vee e_3=e_1,\\
e_4\vee e_2=e_1.
\end{array}$

$\mathcal{DT}_4^{17} :
\begin{array}{ll}  
e_2\prec e_3=e_1,\\
e_3\prec e_3=ae_1,\\
e_4\prec e_3=be_1,
\end{array}\quad
\begin{array}{ll} 
e_2\succ e_4=-ae_1,\\
e_4\succ e_3=ae_1,\\
e_4\succ e_4=e_1,
\end{array}\quad
\begin{array}{ll} 
e_2\vee e_4=e_1,\\
e_3\vee e_3=e_1,\\
e_4\vee e_4=ae_1.
\end{array}$

$\mathcal{DT}_4^{18} :
\begin{array}{ll}  
e_3\prec e_4=e_1,\\
e_4\prec e_2=e_1,\\
e_4\prec e_4=e_1,
\end{array}\quad
\begin{array}{ll} 
e_3\succ e_3=e_1,\\
e_3\succ e_4=e_1,\\
e_4\succ e_4=e_1,
\end{array}\quad
\begin{array}{ll} 
e_3\vee e_2=e_1,\\
e_3\vee e_3=e_1,\\
e_4\vee e_3=e_1.
\end{array}$

$\mathcal{DT}_4^{19} :
\begin{array}{ll}  
e_2\prec e_2=e_1,\\
e_3\prec e_2=e_1,\\
e_4\prec e_3=e_1,
\end{array}\quad
\begin{array}{ll}
e_4\prec e_4=e_1,\\ 
e_2\succ e_4=e_1,\\
e_3\succ e_3=e_1,\\
e_4\succ e_3=e_1,
\end{array}\quad
\begin{array}{ll}
e_4\succ e_4=e_1,\\ 
e_3\vee e_3=e_1,\\
e_4\vee e_4=e_1.
\end{array}$

$\mathcal{DT}_4^{20} :
\begin{array}{ll}  
e_1\prec e_3=e_2,\\
e_3\prec e_1=a^2e_4,\\
e_3\prec e_3=e_2,
\end{array}\quad
\begin{array}{ll}
e_1\succ e_3=e_2,\\
e_3\succ e_1=e_4,\\
e_3\succ e_3=ae_4,
\end{array}\quad
\begin{array}{ll} 
e_1\vee e_3=be_4,\\
e_3\vee e_1=e_4,\\
e_3\vee e_3=e_2.
\end{array}$

$\mathcal{DT}_4^{21} :
\begin{array}{ll}  
e_1\prec e_3=e_2,\\
e_3\prec e_1=e_2,\\
e_3\prec e_3=e_2,\\
\end{array}\quad
\begin{array}{ll} 
e_1\succ e_3=e_2+e_4,\\
e_3\succ e_1=e_2+e_4,\\
e_3\succ e_3=e_2+e_4,\\
\end{array}\quad
\begin{array}{ll} 
e_1\vee e_3=e_4,\\
e_3\vee e_1=e_4,\\
e_3\vee e_3=e_4.
\end{array}$
\end{thm}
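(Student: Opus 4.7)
The plan is to mimic the strategy used in Theorem 3.1 for the 3-dimensional case, but now working with a 4-dimensional basis $\{e_1,e_2,e_3,e_4\}$ and the three bilinear operations $\prec,\succ,\vee$. I would begin by writing the most general tridendriform structure as
$$e_i\prec e_j=\sum_{k=1}^4\gamma_{ij}^ke_k,\quad e_i\succ e_j=\sum_{k=1}^4\delta_{ij}^ke_k,\quad e_i\vee e_j=\sum_{k=1}^4\xi_{ij}^ke_k,$$
so that the seven defining axioms translate into the polynomial system (2.2)--(2.8) over the $48+48+48=144$ unknown structure constants $\gamma_{ij}^k,\delta_{ij}^k,\xi_{ij}^k$. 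The first step is to feed this system into Mathematica and extract, via Gr\"obner basis or direct elimination, every solution branch; each branch gives a multiplication table defining a candidate tridendriform structure.

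Next, I would filter out the purely associative structures (those for which the cumulative product $*=\prec+\succ+\vee$ already encodes everything and two of the three operations can be absorbed in a trivial way), since the theorem excludes them. For the genuinely tridendriform branches, one then applies the action of $GL_4(\mathbb{C})$ coming from changes of basis: if $\phi:\mathcal{E}\to\mathcal{E}'$ is an invertible linear map, the compatibility $\phi\circ\prec=\prec'\circ(\phi\otimes\phi)$ (and similarly for $\succ,\vee$) from Definition 2.2 gives polynomial transformation rules on the $\gamma,\delta,\xi$. Using these rules one normalises each branch: free parameters are eliminated whenever possible, and nonzero scalars are rescaled to $1$. The surviving invariants (a few genuinely continuous parameters $a,b,c,d$) label the 21 pairwise non-isomorphic families $\mathcal{DT}_4^1,\dots,\mathcal{DT}_4^{21}$.

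To illustrate the mechanism, I would work out one representative case in detail, exactly as the paper does for $\mathcal{DT}_3^5$. For instance, fix the $\prec$-table of $\mathcal{DT}_4^5$, namely $e_1\prec e_3=e_3\prec e_1=e_2+e_4$, and leave all $\succ$- and $\vee$-products generic with unknowns $a_i,b_i,c_i,x_i,y_i,z_i$ (and additional entries for the fourth basis vector). Substituting into the seven axioms produces a large but triangular linear-in-each-unknown system; all coefficients collapse to $0$ except those corresponding to $e_3\succ e_1=e_3\succ e_3=e_2+e_4$ and $e_3\vee e_3=e_2+e_4$, which exactly reproduces $\mathcal{DT}_4^5$. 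The other twenty cases proceed by the same template: start from the constrained $\prec$-table dictated by the branch of the polynomial system, solve the axioms for $\succ,\vee$, and identify the normal form after a change of basis.

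The main obstacle will be the combinatorial explosion of the solution set of (2.2)--(2.8) and, more subtly, the verification that the 21 listed families are genuinely pairwise non-isomorphic and cover every non-associative branch. The first difficulty is purely computational and is handled by the algebra package; the second requires invariants that distinguish the families (such as dimensions of $\mathcal{E}\prec\mathcal{E}$, $\mathcal{E}\succ\mathcal{E}$, $\mathcal{E}\vee\mathcal{E}$, the associated associative algebra $(\mathcal{E},*)$ of Proposition 2.3, and the loci of the continuous parameters $a,b,c,d$ that cannot be absorbed by any admissible change of basis). Once these invariants are tabulated, non-isomorphism follows by inspection and the classification is complete.
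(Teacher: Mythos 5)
Your proposal follows essentially the same route as the paper's own proof: fix a candidate $\prec$-table, leave the $\succ$- and $\vee$-products generic with unknown coefficients, impose the seven tridendriform axioms to obtain a polynomial system solved by computer algebra, and present one representative case in full (the paper works out $\mathcal{DT}_4^{18}$ where you chose $\mathcal{DT}_4^5$), asserting the remaining cases are analogous. If anything, you are more explicit than the paper about the two steps it leaves implicit, namely how the candidate $\prec$-tables are generated in the first place and how pairwise non-isomorphism of the $21$ families is certified via basis-change invariants.
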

\begin{proof}

We give the proof  of only  one case. Other cases can be proved similarly. Suppose that a tridendriform algebra $\mathcal{A}=(TDend, \prec)$ has the following multiplication: 
$$\begin{array}{ll}
e_3 \prec e_4=e_1,\quad 
e_4 \prec e_2=e_1,\quad 
e_4 \prec e_4=e_1.
\end{array}$$

We define $\mathcal{B}=(Tridend,\succ,\vee)$ by the multiplication: 

$$\begin{array}{ll}  
e_1 \succ e_1 = a_1e_1+a_2e_2+a_3e_3+a_4e_4,\\
e_1 \succ e_2 = a_5e_1+a_6e_2+a_7e_3+a_8e_4,\\
e_1 \succ e_3 = a_9e_1+a_{10}e_2+a_{11}e_3+a_{12}e_4,\\
e_1 \succ e_4 = a_{13}e_1+a_{14}e_2+a_{15}e_3+a_{16}e_4,\\
e_2 \succ e_1 = b_1e_1+b_2e_2+b_3e_3+b_4e_4,\\
e_2 \succ e_2 = b_5e_1+b_6e_2+b_7e_3+b_8e_4,\\
e_2 \succ e_3 = b_9e_1+b_{10}e_2+b_{11}e_3+b_{12}e_4,\\
e_2 \succ e_4 = b_{13}e_1+b_{14}e_2+b_{15}e_3+b_{16}e_4,\\
e_3 \succ e_1 = r_1e_1+r_2e_2+r_3e_3+r_4e_4,\\
e_3 \succ e_2 = r_5e_1+r_6e_2+r_7e_3+r_8e_4,\\
e_3 \succ e_3 = r_9e_1+r_{10}e_2+r_{11}e_3+r_{12}e_4,\\
e_3 \succ e_4 = r_{13}e_1+r_{14}e_2+r_{15}e_3+r_{16}e_4,\\
e_4 \succ e_1 = s_1e_1+s_2e_2+s_3e_3+s_4e_4,\\
e_4 \succ e_2 = s_5e_1+s_6e_2+s_7e_3+s_8e_4,\\
e_4 \succ e_3 = s_9e_1+s_{10}e_2+s_{11}e_3+s_{12}e_4,\\
e_4 \succ e_4 = s_{13}e_1+s_{14}e_2+s_{15}e_3+s_{16}e_4,\\
\end{array}\quad\quad
\begin{array}{ll} 
e_1 \vee e_1 = x_1e_1+x_2e_2+x_3e_3+x_4e_4,\\
e_1 \vee e_2 = x_5e_1+x_6e_2+x_7e_3+x_8e_4,\\
e_1 \vee e_3 = x_9e_1+x_{10}e_2+x_{11}e_3+x_{12}e_4,\\
e_1 \vee e_4 = x_{13}e_1+x_{14}e_2+x_{15}e_3+x_{16}e_4,\\
e_2 \vee e_1 = y_1e_1+y_2e_2+y_3e_3+y_4e_4,\\
e_2 \vee e_2 = y_5e_1+y_6e_2+y_7e_3+y_8e_4,\\
e_2 \vee e_3 = y_9e_1+y_{10}e_2+y_{11}e_3+y_{12}e_4,\\
e_2 \vee e_4 = y_{13}e_1+y_{14}e_2+y_{15}e_3+y_{16}e_4,\\
e_3 \vee e_1 = z_1e_1+z_2e_2+z_3e_3+z_4e_4,\\
e_3 \vee e_2 = z_5e_1+z_6e_2+z_7e_3+z_8e_4,\\
e_3 \vee e_3 = z_9e_1+z_{10}e_2+z_{11}e_3+z_{12}e_4,\\
e_3 \vee e_4 = z_{13}e_1+z_{14}e_2+z_{15}e_3+z_{16}e_4,\\
e_4 \vee e_1 = t_1e_1+t_2e_2+t_3e_3+t_4e_4,\\
e_4 \vee e_2 = t_5e_1+t_6e_2+t_7e_3+t_8e_4,\\
e_4 \vee e_3 = t_9e_1+t_{10}e_2+t_{11}e_3+t_{12}e_4,\\
e_4 \vee e_4 = t_{13}e_1+t_{14}e_2+t_{15}e_3+t_{16}e_4.
\end{array}$$

where $a_i,b_i,r_i,s_i,x_i,y_i,z_i$ and $t_i$
are unknowns $(i =1,2,...,16).$
Verifying the tridendriform algebras axioms, we get the following constraints
for the structure constants:
\begin{align*}
a_1=a_2=a_3=a_4=a_5=a_6=a_7=a_8=a_9=a_{10}=a_{11}=a_{12}=a_{13}=a_{14}=a_{15}=a_{16}=0,\\
b_1=b_2=b_3=b_4=b_5=b_6=b_7=b_8=b_9=b_{10}=b_{11}=b_{12}=b_{13}=b_{14}=b_{15}=b_{16}=0,\\
r_1=r_2=r_3=r_4=r_5=r_6=r_7=r_8=0,\\
s_1=s_2=s_3=s_4=s_5=s_6=s_7=s_8=s_9=s_{10}=s_{11}=s_{12}=0,\\
x_1=x_2=x_3=x_4=x_5=x_6=x_7=x_8=x_9=x_{10}=x_{11}=x_{12}=x_{13}=x_{14}=x_{15}=x_{16}=0,\\
y_1=y_2=y_3=y_4=y_5=y_6=y_7=y_8=y_9=y_{10}=y_{11}=y_{12}=y_{13}=y_{14}=y_{15}=y_{16}=0,\\
z_1=z_2=z_3=z_4=z_{13}=z_{14}=z_{15}=z_{16}=0,\\
t_1=t_2=t_3=t_4=t_5=t_6=t_7=t_8=t_{13}=t_{14}=t_{15}=t_{16}=0,
\end{align*}
and
\begin{align*}
r_9=r_{10}=r_{11}=r_{12}=r_{13}=r_{14}=r_{15}=r_{16}=1,\\
s_{13}=s_{14}=s_{15}=s_{16}=1,\\
z_5=z_6=z_7=z_8=z_9=z_{10}=z_{11}=z_{12}=1,\\ 
  t_9=t_{10}=t_{11}=t_{12}=1.
\end{align*}
We find the table of multiplication as follows:

$$\begin{array}{ll}  
e_3 \prec e_4=e_1,\\
e_4 \prec e_2=e_1\\
e_4 \prec e_4= e_1
\end{array}\quad\quad
\begin{array}{ll} 
e_3 \succ e_3= e_1,\\
e_3 \succ e_4= e_1,\\
e_4 \succ e_4= e_1,
\end{array}\quad\quad
\begin{array}{ll} 
e_3 \vee e_2= e_1,\\
e_3 \vee e_3= e_1,\\
e_4 \vee e_3= e_1.
\end{array}$$

This is the tridendriform algebra $\mathcal{DT}_4^{18}$.
Similar observations can be applied to other cases. 
\end{proof}
\section{Classification of derivations and centroids of tridendriform algebra}
\subsection{Classification of derivations}
This section describes the classification of derivation algebras of $2$- and  $3$-dimensional tridendriform algebras over the field $\mathbb{F}$ of characteristic $0$. 
Let $\left\{e_1,e_2, e_3,\cdots, e_n\right\}$ be a basis of an $n$-dimensional tridendriform algebras $\mathcal{A}.$ The product of basis can be expressed in terms of structure constants as follows :
\begin{eqnarray}
\mathcal{D}(e_i)=\sum_{j=1}^nd_{ji}e_j\nonumber.
\end{eqnarray}
We have 
\begin{eqnarray}
\sum_{k=1}^nd_{pk}\gamma_{ij}^k&=&\sum_{k=1}^nd_{ip}\gamma_{pj}^k+\sum_{k=1}^nd_{pj}\gamma_{ip}^k,\label{eqd1}\\
\sum_{k=1}^nd_{pk}\delta_{ij}^k&=&\sum_{k=1}^nd_{ip}\delta_{pj}^k+\sum_{k=1}^nd_{pj}\delta_{ip}^k,\label{eqd2}\\
\sum_{k=1}^nd_{pk}\xi_{ij}^k&=&\sum_{k=1}^nd_{ip}\xi_{pj}^k+\sum_{k=1}^nd_{pj}\xi_{ip}^k.\label{eqd3}
\end{eqnarray}

\begin{thm}\label{theoC1}
The \textbf{derivation} of $4$-dimensional tridendriform algebras 
has the following form:\\

\begin{tabular}{||c||c||c||c||c||c||c||c||c||c||c||c||}
\hline
IC&Der$(\mathcal{DT}_m^n)$&$Dim(Der)$&IC&Der$(\mathcal{DT}_m^n)$&$Dim(Der)$\\
			\hline
$\mathcal{DT}_4^3$&
$\left(\begin{array}{cccc}
d_{11}&0&0&0\\
0&d_{11}&0&0\\
0&0&d_{33}&2k_1\\
0&0&0&k_2
\end{array}
\right)$
&
3
&
$\mathcal{DT}_4^{4}$&
$\left(\begin{array}{cccc}
d_{11}&0&0&0\\
0&d_{11}&0&0\\
0&0&d_{33}&k_1\\
0&0&0&k_2
\end{array}
\right)$
&
4
\\ \hline
$\mathcal{DT}_4^5$&
$\left(\begin{array}{cccc}
0&0&0&0\\
0&d_{22}&0&-d_{22}\\
0&0&0&0\\
0&d_{42}&0&-d_{42}
\end{array}
\right)$
&
2
&
$\mathcal{DT}_4^{6}$&
$\left(\begin{array}{cccc}
0&0&0&0\\
0&d_{22}&0&-d_{22}\\
0&0&0&0\\
0&d_{42}&0&-d_{42}
\end{array}
\right)$
&
2
\\ \hline
$\mathcal{DT}_4^7$&
$\left(\begin{array}{cccc}
d_{11}&0&0&0\\
0&d_{22}&0&0\\
0&0&d_{11}&k_3\\
0&d_{42}&0&k_4
\end{array}
\right)$
&
5
&
$\mathcal{DT}_4^{8}$&
$\left(\begin{array}{cccc}
d_{11}&0&0&0\\
0&d_{22}&0&k_1\\
0&0&0&0\\
0&d_{42}&0&d_{44}
\end{array}
\right)$
&
4
\\ \hline
$\mathcal{DT}_4^9$&
$\left(\begin{array}{cccc}
d_{11}&0&0&0\\
0&2d_{11}&0&0\\
0&0&d_{11}&0\\
0&0&0&d_{11}
\end{array}
\right)$
&
1
&
$\mathcal{DT}_4^{10}$&
$\left(\begin{array}{cccc}
d_{11}&0&0&0\\
0&2d_{11}&0&0\\
0&0&d_{11}&0\\
0&0&0&d_{11}
\end{array}
\right)$
&
1
\\ \hline
$\mathcal{DT}_4^{11}$&
$\left(\begin{array}{cccc}
d_{11}&0&0&0\\
0&2d_{11}&0&0\\
0&0&d_{33}&k_{3}\\
0&0&d_{43}&k_{4}
\end{array}
\right)$
&
4
&
$\mathcal{DT}_4^{13}$&
$\left(\begin{array}{cccc}
d_{11}&0&0&0\\
0&d_{11}&0&0\\
0&0&d_{11}&0\\
0&0&0&2d_{11}
\end{array}
\right)$
&
1
\\ \hline
$\mathcal{DT}_4^{14}$&
$\left(\begin{array}{cccc}
d_{11}&0&0&0\\
0&d_{11}&0&0\\
0&0&d_{11}&0\\
0&0&0&2d_{11}
\end{array}
\right)$
&
1
&
$\mathcal{DT}_4^{15}$&
$\left(\begin{array}{cccc}
d_{11}&0&0&0\\
0&d_{11}&0&0\\
0&0&d_{11}&0\\
0&0&0&2d_{11}
\end{array}
\right)$
&
1
\\ \hline
\end{tabular}

\begin{tabular}{||c||c||c||c||c||c||c||c||c||c||c||c||}
\hline
IC&Der$(\E)$ &$Dim(\D)$&IC&Der$(\E)$&$Dim(\D)$\\
			\hline
$\mathcal{DT}_4^{16}$&
$\left(\begin{array}{cccc}
\frac{d_{11}}{2}&0&0&0\\
0&\frac{d_{11}}{2}&0&0\\
0&0&\frac{d_{11}}{2}&0\\
0&0&0&\frac{d_{11}}{2}
\end{array}
\right)$
&
1
&
$\mathcal{DT}_4^{17}$&
$\left(\begin{array}{cccc}
d_{11}&0&0&0\\
d_{21}&\frac{d_{11}}{2}&0&0\\
0&0&\frac{d_{11}}{2}&0\\
0&0&0&\frac{d_{11}}{2}
\end{array}
\right)$
&
2
\\ \hline
$\mathcal{DT}_4^{18}$&
$\left(\begin{array}{ccccc}
d_{11}&d_{12}&0&0\\
0&\frac{d_{11}+d_{12}}{2}&0&0\\
0&0&\frac{d_{11}+d_{12}}{2}&0\\
0&0&0&\frac{d_{11}+d_{12}}{2}
\end{array}
\right)$
&
2
&
$\mathcal{DT}_4^{19}$&
$\left(\begin{array}{cccc}
d_{11}&0&0&0\\
0&\frac{d_{11}}{2}&0&0\\
0&0&\frac{d_{11}}{2}&0\\
0&0&0&\frac{d_{11}}{2}
\end{array}
\right)$
&
1
\\ \hline
$\mathcal{DT}_4^{20}$&
$\left(\begin{array}{cccc}
d_{11}&0&0&0\\
0&2d_{11}&0&0\\
0&0&d_{33}&2k_{3}\\
0&0&d_{43}&k_{4}
\end{array}
\right)$
&
4
&
$\mathcal{DT}_4^{21}$&
$\left(\begin{array}{cccc}
d_{11}&0&0&0\\
0&2d_{11}&0&0\\
0&0&d_{33}&k_{3}\\
0&0&d_{43}&k_{4}
\end{array}
\right)$
&
4
\\ \hline
\end{tabular}
\end{thm}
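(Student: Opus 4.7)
The plan is to treat each of the 4-dimensional tridendriform algebras $\mathcal{DT}_4^n$ from the previous theorem separately and, for each one, write down the most general linear endomorphism $\mathcal{D}$ of the underlying 4-dimensional space and impose the derivation identity on every pair of basis vectors. Concretely, I would expand $\mathcal{D}(e_i)=\sum_{j=1}^{4} d_{ji}\,e_j$, giving sixteen scalar unknowns arranged in a $4\times 4$ matrix $(d_{ji})$, and substitute this ansatz into the three linear systems \eqref{eqd1}, \eqref{eqd2}, \eqref{eqd3} specialized to the structure constants $\gamma_{ij}^k,\delta_{ij}^k,\xi_{ij}^k$ read off from the multiplication table of the algebra under consideration.

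For a fixed algebra the procedure is mechanical: each ordered pair $(e_i,e_j)$ together with each of the three products $\prec,\succ,\vee$ yields a linear relation of the form $\mathcal{D}(e_i\circ e_j)=\mathcal{D}(e_i)\circ e_j+e_i\circ\mathcal{D}(e_j)$, which, after equating coefficients of each basis vector, produces up to $4\cdot 4\cdot 4\cdot 3$ homogeneous linear equations in the sixteen unknowns $d_{ji}$; most of these collapse or vanish because the structure constants of $\mathcal{DT}_4^n$ are very sparse. Row-reducing the resulting matrix pins down which $d_{ji}$ must be zero and which free parameters survive, exactly in the pattern displayed in the statement. For instance, in $\mathcal{DT}_4^3$ the combined system forces the matrix to be block-diagonal with the only nontrivial entries being $d_{11}=d_{22}$, $d_{33}$, and a $2\times 2$ block involving $2k_1,k_2$ in the last two rows, yielding a $3$-parameter family. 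The dimension of $\mathrm{Der}(\mathcal{DT}_4^n)$ is then just the number of free parameters in the solution space.

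The main obstacle is not conceptual but combinatorial: each of the eighteen nontrivial classes demands an independent linear-algebra computation, and several of them ($\mathcal{DT}_4^3$, $\mathcal{DT}_4^7$, $\mathcal{DT}_4^{14}$, $\mathcal{DT}_4^{17}$, $\mathcal{DT}_4^{20}$) additionally carry scalar parameters $a,b,c,d$ that must be tracked symbolically, so one must verify that the derivation space has a uniform description independent of those parameters (or record the exceptional values where the dimension jumps). For this reason I would delegate the row-reduction to the computer algebra system used throughout the paper, and then manually recheck one or two representative cases, for example $\mathcal{DT}_4^{18}$, where the appearance of the average $(d_{11}+d_{12})/2$ on the diagonal is a good sanity test for correct transcription of the structure constants.

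Finally, to present the result in the compact matrix form of the tables, I would adopt the convention that free parameters coinciding with diagonal entries be named $d_{ii}$, and the remaining independent off-diagonal parameters be named $k_1,k_2,\ldots$; the displayed dimension is then obtained simply by counting the independent symbols in each matrix, completing the classification.
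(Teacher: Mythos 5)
Your proposal is essentially identical to the paper's own method: the authors also expand $\mathcal{D}(e_i)=\sum_j d_{ji}e_j$, substitute the structure constants of each $\mathcal{DT}_4^n$ into the linear systems \eqref{eqd1}--\eqref{eqd3}, solve the resulting sparse homogeneous system (with computer-algebra assistance), and read off the dimension as the number of free parameters, illustrating only the case $\mathcal{DT}_4^9$ explicitly. Your added remarks about tracking the scalar parameters $a,b,c,d$ symbolically and hand-checking a representative case are sensible refinements but do not change the route.
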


	\begin{proof}
We provide the proof of only one case, other cases 
can be addressed similarly with or without
modification(s). Let's consider $\mathcal{DT}_4^{9}$. 
the systems of equations (\ref{eqd1}), (\ref{eqd2}) and 
(\ref{eqd3}) we get
\begin{center}$d_{12}=d_{13}=d_{14}=d_{21}=d_{23}=d_{24}=d_{31}=d_{32}=d_{34}=d_{41}=d_{42}=d_{43}=0,$
$ d_{22}=2d_{11},\quad d_{33}=d_{11}\quad d_{44}=d_{11}.$\end{center}
Hence, the derivations of $\mathcal{DT}_4^{9}$ are indicated as follows\\
$$d_1=\left(\begin{array}{cccc}
1&0&0&0\\
0&2&0&0\\
0&0&1&0\\
0&0&0&1
\end{array}
\right)$$
is the basis of $Der(\mathcal{DT}_4^{9})$ and $Dim(Der(\mathcal{DT}_4^{9}))=1.$ 
	\end{proof}

\vspace{.2in}
\begin{cor}\,
\begin{itemize}
	\item There exist only trivial derivations for the $3$-dimensional tridendriform algebras.
	\item For the $4$-dimensional tridendriform algebras, the dimension of derivation algebra ranges from one to eight.
\end{itemize}
\end{cor}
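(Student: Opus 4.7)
The plan is to deduce both statements directly from the classification already established in Theorems 3.2 and 4.1, together with the linear system (\ref{eqd1})--(\ref{eqd3}) that characterises derivations; the argument is essentially a case-by-case verification inside each isomorphism class.

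For the three-dimensional assertion, I would traverse the six non-isomorphic classes $\mathcal{DT}_3^1,\ldots,\mathcal{DT}_3^6$ produced by Theorem 3.2. In each case I would write $\mathcal{D}(e_i)=\sum_{j=1}^{3} d_{ji}e_j$ with nine unknowns $d_{ij}$, substitute the structure constants $\gamma_{ij}^k,\delta_{ij}^k,\xi_{ij}^k$ read off from Theorem 3.2 into (\ref{eqd1})--(\ref{eqd3}), and solve the resulting homogeneous linear system. The claim to verify is that in every one of the six cases the solution space collapses to the trivial derivation, i.e.\ the diagonal scalar derivation compatible with the natural grading (in several cases even to the zero map); the multiplications of $\mathcal{DT}_3^k$ are sufficiently rich that the coupling between the three products $\prec,\succ,\vee$ forces all off-diagonal entries and almost all diagonal entries of $\mathcal{D}$ to vanish.

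For the four-dimensional assertion the bulk of the work is already visible in Theorem 4.1: each matrix displayed there exhibits the free parameters of $\mathrm{Der}(\mathcal{DT}_4^n)$ explicitly, so the dimension is obtained simply by counting. I would first confirm by inspection that the lower bound $1$ is attained, for instance by $\mathcal{DT}_4^9$ as shown in the proof of Theorem 4.1. The remaining step is to handle the three classes $\mathcal{DT}_4^1,\mathcal{DT}_4^2$ and $\mathcal{DT}_4^{12}$ that are absent from the tabulation in Theorem 4.1: for each of these I would rerun the same substitution procedure, now with a $4\times 4$ matrix of sixteen unknowns $d_{ij}$, and verify that one of them realises the claimed upper bound $8$.

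The main obstacle is precisely this last computation. Algebras such as $\mathcal{DT}_4^1$ carry very few defining relations, so (\ref{eqd1})--(\ref{eqd3}) impose correspondingly weak constraints on the sixteen entries of $\mathcal{D}$, and an accurate free-parameter count requires care since many potential solutions collapse only after combining the three systems simultaneously; I would cross-check the outcome using the \emph{Mathematica} routine advertised in the introduction. Once these residual cases are settled, combining them with the dimensions directly readable from Theorem 4.1 yields the full range $1$ to $8$ and completes the proof.
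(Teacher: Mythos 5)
Your proposal takes essentially the same route as the paper: the corollary is read off from the case-by-case solution of the linear systems (\ref{eqd1})--(\ref{eqd3}) over the isomorphism classes of Theorems 3.2 and 4.1, with the dimensions obtained by counting free parameters in the resulting derivation matrices. The one substantive point worth recording is that you have correctly identified the residual work that the paper itself does not display: the table in Theorem 4.1 omits $\mathcal{DT}_4^1$, $\mathcal{DT}_4^2$ and $\mathcal{DT}_4^{12}$, and every tabulated class has derivation algebra of dimension at most $5$, so the claimed upper bound of $8$ can only be realised by one of the omitted classes and is not substantiated by the displayed data (indeed the abstract states the range as $1$ to $5$ while the introduction and the corollary state $1$ to $8$). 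Likewise the three-dimensional assertion is nowhere tabulated in the paper. Your plan to rerun the substitution for the missing classes and to traverse $\mathcal{DT}_3^1,\ldots,\mathcal{DT}_3^6$ explicitly is therefore not redundant care but the step actually needed to close the argument; until that computation is exhibited, the corollary as stated rests on an unverified (and internally inconsistent) claim about the maximal dimension.
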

\begin{rem}
\noindent In the above-displayed tables, the following notations are used :
\begin{itemize}
	\item $k_1=2d_{11}-d_{33},\quad k_{2}=d_{11}-d_{43},\quad k_{3}=2d_{11}-d_{22},\quad k_{4}=2d_{11}-d_{42}.$
	\item IC: Class isomorphism.
\end{itemize}
\end{rem}
\subsection{Classification of Centroid}
\begin{defn}
Let $(\E, \prec,\succ, \vee)$ be a tridendriform algebra. A linear map
 $\psi : \mathcal{E}\rightarrow \mathcal{E}$ is called an element of \textbf{centroid} on $\mathcal{E}$, if for all $x, y\in \mathcal{E}$
\begin{eqnarray}
\psi(x)\prec y&=&\psi(x\prec y)=x\prec \psi(y),\\ 
 \psi(x)\succ y&=&\psi(x\succ  y)=x\succ \psi(y),\\
 \psi(x)\vee y&=&\psi(x\vee y)=x\vee\psi(y).
\end{eqnarray}
The set of all  elements of \textbf{centroid} of $\mathcal{E}$ is denoted $C(\mathcal{E})$.
 \end{defn}
This section describes the centroid  of tridendriform algebras for dimensions $2$ and $3$ over the field $\mathbb{F}$ of characteristic $0$.
Let $\left\{e_1,e_2, e_3,\cdots, e_n\right\}$ be a basis of an $n$-dimensional tridendriform algebras $\mathcal{E}.$ The product of basis is expressed in terms of structure constants as follows
\begin{eqnarray}
\psi(e_i)=\sum_{j=1}^na_{ji}e_j\nonumber.
\end{eqnarray}
By using the above definition of $\psi$ and basis multiplication of tridendriform algebra, 
we get the following equations of structure constants :
\begin{eqnarray}
\sum_{p=1}^na_{pi}\gamma_{pj}^q&=&\sum_{p=1}^n\gamma_{ij}^pa_{qp}=\sum_{p=1}^na_{pj}
\gamma_{ip}^q,\label{eqc1}\\
\sum_{p=1}^na_{pi}\delta_{pj}^q&=&\sum_{p=1}^n\delta_{ij}^pa_{qp}=
\sum_{p=1}^na_{pi}\delta_{ip}^q,\label{eqc2}\\
\sum_{p=1}^na_{pi}\xi_{pj}^q&=&\sum_{p=1}^n\xi_{ij}^pa_{qp}=\sum_{p=1}^na_{pj}\xi_{ip}^q\label{eqc3}
\end{eqnarray}

\begin{thm}\label{theoc1}
The \textbf{centroid} of $3$-dimensional tridendriform algebras has the following form 
:\\

\begin{tabular}{||c||c||c||c||c||c||c||c||c||c||c||c||}
\hline
IC&$C(\mathcal{DT}_m^n)$ &$Dim(C)$&IC&$C(\mathcal{DT}_m^n)$&$Dim(C)$\\
			\hline
$\mathcal{DT}_3^1$&
$\left(\begin{array}{cccc}
a_{11}&0&0\\
0&a_{11}&0\\
0&0&a_{11}
\end{array}
\right)$
&
1
&
$\mathcal{DT}_3^{2}$&
$\left(\begin{array}{cccc}
a_{11}&0&0\\
0&a_{11}&0\\
0&0&a_{11}
\end{array}
\right)$
&
1
\\ \hline
$\mathcal{DT}_3^3$&
$\left(\begin{array}{cccc}
a_{33}&0&0\\
0&a_{33}&0\\
a_{31}&a_{32}&a_{33}
\end{array}
\right)$
&
3
&
$\mathcal{DT}_3^{4}$&
$\left(\begin{array}{cccc}
a_{11}&0&0\\
0&a_{11}&0\\
a_{31}&a_{32}&a_{11}
\end{array}
\right)$
&
3
\\ \hline
$\mathcal{DT}_3^5$&
$\left(\begin{array}{cccc}
a_{11}&0&0\\
0&a_{11}&0\\
a_{31}&a_{32}&a_{11}
\end{array}
\right)$
&
3
&
$\mathcal{DT}_3^{6}$&
$\left(\begin{array}{cccc}
a_{33}&0&0\\
0&a_{33}&0\\
a_{31}&a_{32}&a_{33}
\end{array}
\right)$
&
3
\\ \hline
\end{tabular}\end{thm}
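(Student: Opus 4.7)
The plan is to solve, separately for each of the six non-isomorphic three-dimensional tridendriform algebras $\mathcal{DT}_3^1,\ldots,\mathcal{DT}_3^6$ produced in the classification theorem, the linear system imposed on a prospective centroid $\psi$ by the defining identities. Writing $\psi(e_i)=\sum_{j=1}^{3}a_{ji}e_j$, the $3\times 3$ matrix $(a_{ji})$ must satisfy the three pairs of equations (\ref{eqc1})--(\ref{eqc3}) for every triple $(i,j,q)$; substituting the structure constants read off the multiplication table of $\mathcal{DT}_3^n$ collapses these into a finite linear system in the nine unknowns $a_{ij}$, which I would solve by elimination.

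I would carry out the cases in the order listed. For $\mathcal{DT}_3^1$ and $\mathcal{DT}_3^2$ every nonzero product has value $e_2$ built from combinations of $e_1$ and $e_3$, so the conditions $\psi(e_i\circ e_j)=\psi(e_i)\circ e_j=e_i\circ \psi(e_j)$ (applied, for instance, to the defining triples $e_1\prec e_1=e_2$, $e_3\succ e_3=e_2$, $e_1\vee e_3=e_2$) force $a_{ij}=0$ for $i\neq j$ and equate the three diagonal entries to a common scalar $a_{11}$, giving the claimed one-parameter centroid. For $\mathcal{DT}_3^3$ and $\mathcal{DT}_3^4$ every nonzero product lands in $e_3$, so the $e_3$-column of $\psi$ is essentially unconstrained on its first two components: the same calculation pins down the diagonal to be $a_{33}$ (respectively $a_{11}$) and kills the entries $a_{12},a_{13},a_{21},a_{23}$, but leaves $a_{31}$ and $a_{32}$ free, yielding three parameters. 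The remaining cases $\mathcal{DT}_3^5$ and $\mathcal{DT}_3^6$ are analogous: once again all products land in $e_3$, so the same mechanism produces exactly three free parameters, matching the stated matrix forms.

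For the written proof I would record one case explicitly and defer the rest to a uniform ``similar computations'' statement supported by the Mathematica verification cited in the introduction. A natural choice is $\mathcal{DT}_3^5$, paralleling the exemplar used in the proof of the three-dimensional classification theorem: there one substitutes the nine nonzero structure constants coming from $e_1\prec e_1=e_1\prec e_2=e_2\prec e_1=e_1\succ e_1=e_2\succ e_1=e_2\succ e_2=e_2\vee e_1=e_2\vee e_2=e_3$ into (\ref{eqc1})--(\ref{eqc3}) and reads off the constraints $a_{12}=a_{13}=a_{21}=a_{23}=0$, $a_{22}=a_{11}$, $a_{33}=a_{11}$, with $a_{31},a_{32}$ free.

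The main obstacle is not any single computation but the combinatorial bookkeeping. Each of the three centroid identities is in fact a \emph{double} equality $\psi(x\circ y)=\psi(x)\circ y=x\circ\psi(y)$, so one must be careful to impose both halves and not collapse them into a single weaker condition, which would overcount the centroid and conflate it with the quasi-centroid treated in the next subsection. Ensuring that both equalities are enforced for each of the three products $\prec,\succ,\vee$, and in particular checking that the three independent systems coming from (\ref{eqc1}), (\ref{eqc2}), (\ref{eqc3}) are compatible in each case, is where indexing or sign errors are most likely to enter and is precisely why the authors corroborate the hand computation with computer algebra.
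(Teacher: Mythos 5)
Your proposal matches the paper's proof in substance: both write $\psi(e_i)=\sum_j a_{ji}e_j$, substitute the structure constants of each $\mathcal{DT}_3^n$ into equations (\ref{eqc1})--(\ref{eqc3}), solve the resulting linear system case by case, and present one worked exemplar while deferring the rest to analogous computation (the paper uses $\mathcal{DT}_3^1$ where you use $\mathcal{DT}_3^5$, an immaterial difference). Your added emphasis on enforcing both halves of each double equality is a sound precaution but does not change the method.
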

    \begin{proof}
 To illustrate the used approach, we provide the proof of one case. The other cases can be proved similarly with or without modification(s). Let's consider 
$\mathcal{DT}_3^{1}$. Applying the systems of equations 
(\ref{eqc1}), (\ref{eqc2}) and (\ref{eqc3}) 
we get
$a_{12}=a_{13}=a_{21}=a_{23}=a_{31}=a_{32}=0,\quad a_{22}=a_{11},\quad a_{33}=2a_{11}.$
Thus, the centroids of $\mathcal{DT}_3^{1}$ are reported as follows\\
$$a_1=\left(\begin{array}{ccc}
1&0&0\\
0&1&0\\
0&0&1
\end{array}
\right)$$
is the basis of $C(\mathcal{DT}_3^1)$ and $Dim(C)=1.$ This completes the proof.
	\end{proof}
 \newpage
	\begin{thm}\label{theoc2}
The \textbf{centroid} of $4$-dimensional tridendriform algebras has the following form:\\

\begin{tabular}{||c||c||c||c||c||c||c||c||c||c||c||c||}
\hline
IC&$C(\mathcal{DT}_m^n)$ &$Dim(C)$&IC&$C(\mathcal{DT}_m^n)$&$Dim(C)$\\
			\hline
$\mathcal{DT}_4^1$&
$\left(\begin{array}{cccc}
a_{11}&0&0&0\\
0&a_{11}&0&0\\
0&0&a_{11}&0\\
0&0&0&a_{11}
\end{array}
\right)$
&
1
&
$\mathcal{DT}_4^{2}$&
$\left(\begin{array}{cccc}
a_{11}&0&0&0\\
0&a_{11}&0&0\\
a_{31}&0&a_{11}&0\\
0&0&0&a_{11}
\end{array}
\right)$
&
2
\\ \hline
$\mathcal{DT}_4^3$&
$\left(\begin{array}{cccc}
a_{22}&0&0&0\\
0&a_{22}&0&0\\
a_{31}&a_{32}&a_{22}&0\\
0&0&0&a_{22}\\
\end{array}
\right)$
&
3
&
$\mathcal{DT}_4^{4}$&
$\left(\begin{array}{cccc}
a_{33}&0&0&0\\
0&a_{33}&0&0\\
a_{31}&a_{32}&a_{33}&0\\
0&0&0&a_{33}\\
\end{array}
\right)$
&
3
\\ \hline
$\mathcal{DT}_4^5$&
$\left(\begin{array}{cccc}
a_{22}&0&0&0\\
0&a_{22}&0&0\\
0&0&a_{22}&0\\
0&0&0&a_{22}\\
\end{array}
\right)$
&
1
&
$\mathcal{DT}_4^{6}$&
$\left(\begin{array}{cccc}
a_{33}&0&0&0\\
0&a_{33}&0&0\\
0&0&a_{33}&0\\
0&0&0&a_{33}\\
\end{array}
\right)$
&
1
\\ \hline
$\mathcal{DT}_4^7$&
$\left(\begin{array}{cccc}
a_{22}&0&0&0\\
0&a_{22}&0&0\\
0&0&a_{22}&0\\
0&0&0&a_{22}\\
\end{array}
\right)$
&
1
&
$\mathcal{DT}_4^{8}$&
$\left(\begin{array}{cccc}
a_{22}&0&0&0\\
0&a_{22}&0&0\\
0&0&a_{22}&0\\
0&0&0&a_{22}\\
\end{array}
\right)$
&
1
\\ \hline
$\mathcal{DT}_4^9$&
$\left(\begin{array}{cccc}
a_{22}&0&0&0\\
0&a_{22}&0&0\\
0&0&a_{22}&0\\
0&0&0&a_{22}
\end{array}
\right)$
&
1
&
$\mathcal{DT}_4^{10}$&
$\left(\begin{array}{cccc}
a_{22}&0&0&0\\
0&a_{22}&0&0\\
0&0&a_{22}&0\\
0&0&0&a_{22}
\end{array}
\right)$
&
1
\\ \hline
$\mathcal{DT}_4^{11}$&
$\left(\begin{array}{cccc}
a_{22}&0&0&0\\
0&a_{22}&0&0\\
a_{31}&a_{32}&a_{22}&0\\
0&0&0&a_{22}
\end{array}
\right)$
&
1
&
$\mathcal{DT}_4^{12}$&
$\left(\begin{array}{cccc}
a_{44}&0&0&0\\
0&a_{44}&0&0\\
0&0&a_{44}&0\\
a_{41}&&a_{43}&a_{44}
\end{array}
\right)$
&
3
\\ \hline
$\mathcal{DT}_4^{13}$&
$\left(\begin{array}{cccc}
a_{44}&0&0&0\\
0&a_{44}&0&0\\
0&0&a_{44}&0\\
a_{41}&&a_{43}&a_{44}
\end{array}
\right)$
&
3
&

$\mathcal{DT}_4^{14}$&
$\left(\begin{array}{cccc}
a_{44}&0&0&0\\
0&a_{44}&0&0\\
0&0&a_{44}&0\\
a_{41}&&a_{43}&a_{44}
\end{array}
\right)$
&
3
\\ \hline
$\mathcal{DT}_4^{15}$&
$\left(\begin{array}{cccc}
a_{11}&0&0&0\\
0&a_{11}&0&0\\
0&0&a_{11}&0\\
0&a_{42}&a_{43}&a_{11}
\end{array}
\right)$
&
3
&
$\mathcal{DT}_4^{16}$&
$\left(\begin{array}{cccc}
a_{11}&a_{12}&a_{13}&a_{14}\\
0&a_{11}&0&0\\
0&0&a_{11}&0\\
0&0&0&a_{11}
\end{array}
\right)$
&
4
\\ \hline
$\mathcal{DT}_4^{17}$&
$\left(\begin{array}{cccc}
a_{11}&a_{12}&a_{13}&a_{14}\\
0&a_{11}&0&0\\
0&0&a_{11}&0\\
0&0&0&a_{11}
\end{array}
\right)$
&
4
&
$\mathcal{DT}_4^{18}$&
$\left(\begin{array}{cccc}
a_{11}&a_{12}&a_{13}&a_{14}\\
0&a_{11}&0&0\\
0&0&a_{11}&0\\
0&0&0&a_{11}
\end{array}
\right)$
&
4
\\ \hline
$\mathcal{DT}_4^{19}$&
$\left(\begin{array}{cccc}
a_{11}&a_{12}&a_{13}&a_{14}\\
0&a_{11}&0&0\\
0&0&a_{11}&0\\
0&0&0&a_{11}
\end{array}
\right)$
&
4
&

$\mathcal{DT}_4^{20}$&
$\left(\begin{array}{cccc}
a_{44}&0&0&0\\
a_{21}&a_{44}&a_{23}&0\\
0&0&a_{44}&0\\
a_{41}&0&a_{43}&a_{44}
\end{array}
\right)$
&
5
\\ \hline
$\mathcal{DT}_4^{21}$&
$\left(\begin{array}{cccc}
a_{11}&0&0&0\\
a_{21}&a_{11}&c_{23}&0\\
a_{31}&a_{32}&a_{11}&0\\
a_{41}&0&c_{43}&a_{11}
\end{array}
\right)$
&
5
&
&

&

\\ \hline
\end{tabular}
\end{thm}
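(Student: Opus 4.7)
My plan is to mirror, for each of the twenty-one four-dimensional isomorphism classes $\mathcal{DT}_4^{n}$, the case-by-case method already deployed in the proof of Theorem \ref{theoc1}. For a fixed class $\mathcal{DT}_4^{n}$ with basis $\{e_1,e_2,e_3,e_4\}$, I would write a generic linear endomorphism as $\psi(e_i)=\sum_{j=1}^{4}a_{ji}e_j$, producing a sixteen-parameter ansatz matrix $A=(a_{ji})$. From the multiplication table of $\mathcal{DT}_4^{n}$ recorded in Theorem 3.2 I would then read off the non-zero structure constants $\gamma_{ij}^{k}$, $\delta_{ij}^{k}$, $\xi_{ij}^{k}$, substitute them into the three families of identities (\ref{eqc1}), (\ref{eqc2}), (\ref{eqc3}) for every admissible triple $(i,j,q)\in\{1,2,3,4\}^{3}$, and thereby obtain the linear system whose solution space is the centroid $C(\mathcal{DT}_4^{n})$.

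As a prototype I would run the algorithm in full on $\mathcal{DT}_4^{1}$. The identity $\psi(e_1\prec e_1)=\psi(e_1)\prec e_1$, together with $e_1\prec e_1=e_1$, forces $\psi(e_1)=a_{11}e_1$, hence $a_{21}=a_{31}=a_{41}=0$. The identity attached to $e_1\succ e_2=e_2$ then pins $\psi(e_2)=a_{11}e_2$, killing $a_{12},a_{32},a_{42}$ and fixing $a_{22}=a_{11}$. The four $\vee$-products $e_3\vee e_1=e_3\vee e_3=e_3\vee e_4=e_4\vee e_4=e_3+e_4$ produce a coupled linear subsystem which, after elimination, yields $\psi(e_3)=a_{11}e_3$ and $\psi(e_4)=a_{11}e_4$. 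Consolidating, $A=a_{11}I$ and $\dim C(\mathcal{DT}_4^{1})=1$, matching the first row of the table.

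The remaining twenty classes will be dispatched by exactly the same algorithm, and each resulting normal form is precisely what is recorded in the appropriate row of the table. The main obstacle is not conceptual but combinatorial: for classes with densely populated multiplication tables (for instance $\mathcal{DT}_4^{20}$ and $\mathcal{DT}_4^{21}$), the system arising from (\ref{eqc1})--(\ref{eqc3}) involves dozens of non-trivial scalar constraints in the sixteen unknowns $a_{ji}$, and the sheer number of classes makes hand computation error-prone, particularly when free parameters survive (as in the cases $\mathcal{DT}_4^{16}$--$\mathcal{DT}_4^{19}$ where an entire row of $A$ remains unconstrained). As announced in the introduction, I would therefore mechanize the Gaussian elimination with \emph{Mathematica} and cross-check every resulting matrix by re-substitution into the centroid-defining identities; the matrices displayed in the theorem are transcriptions of these machine-verified normal forms, and the dimensions $\dim C(\mathcal{DT}_4^{n})$ are read off as the number of remaining free parameters among the $a_{ji}$.
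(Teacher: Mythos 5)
Your proposal follows essentially the same route as the paper: the paper's proof likewise writes $\psi(e_i)=\sum_j a_{ji}e_j$, imposes the structure-constant identities (\ref{eqc1})--(\ref{eqc3}), works out the single prototype case $\mathcal{DT}_4^{1}$ to obtain $a_{ji}=0$ for $i\neq j$ and $a_{22}=a_{33}=a_{44}=a_{11}$ (hence $\dim C=1$), and dispatches the remaining twenty classes by the same computer-assisted elimination. Your worked example is, if anything, more explicit than the paper's about which products force which constraints, but the method and the conclusion coincide.
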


	\begin{proof}
Let us consider $\mathcal{DT}_4^{1}$. Applying the systems of equations 
(\ref{eqc1}), (\ref{eqc2}) and (\ref{eqc3})  we get
$a_{21}=a_{31}=a_{41}=a_{42}=a_{21}=a_{31}=a_{41}=a_{42}=0,$
$ a_{22}=a_{11},\quad a_{33}=a_{11}\quad a_{44}=a_{11}.$
As a result, the centroids of $\mathcal{DT}_4^{1}$ are identified as follows\\
$$a_1=\left(\begin{array}{cccc}
1&0&0&0\\
0&1&0&0\\
0&0&1&0\\
0&0&0&1
\end{array}
\right)$$
is the basis of $C(\mathcal{DT}_4^{1}))$ and $Dim(C)=1$. The centroid of the remaining parts of dimension $3$  tridendriform algebras can be handled in a
similar manner as depicted above.
	\end{proof}
\begin{cor}\,
\begin{itemize}
	\item The dimensions of the centroid of $3$-dimensional  tridendriform algebras range between $1$ and $3$.
	\item The dimensions of the centroid of $4$-dimensional tridendriform algebras range between $1$ and $5$.
\end{itemize}
\end{cor}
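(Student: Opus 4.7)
The plan is to derive the corollary by direct inspection of the classification tables already assembled in Theorems \ref{theoc1} and \ref{theoc2}. Each entry in those tables explicitly records $\dim C(\mathcal{DT}_m^n)$ for a representative of a single isomorphism class, and by the classification results of Section 3 those representatives exhaust all $3$- and $4$-dimensional tridendriform algebras up to isomorphism. Bounding $\dim C$ therefore reduces to reading off the minimum and maximum of the numerical column in each of the two tables.

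For the three-dimensional part, I scan the six rows of Theorem \ref{theoc1}: the classes $\mathcal{DT}_3^1$ and $\mathcal{DT}_3^2$ contribute centroid dimension $1$, while $\mathcal{DT}_3^3$, $\mathcal{DT}_3^4$, $\mathcal{DT}_3^5$, and $\mathcal{DT}_3^6$ each contribute centroid dimension $3$; hence the extremes are $1$ and $3$. For the four-dimensional part, repeating the exercise over the $21$ rows of Theorem \ref{theoc2}, the minimum value is $1$ (realised for instance by $\mathcal{DT}_4^1$) and the maximum is $5$ (attained by $\mathcal{DT}_4^{20}$ and $\mathcal{DT}_4^{21}$), while the intermediate values $2$, $3$ and $4$ are also realised (for example by $\mathcal{DT}_4^2$, $\mathcal{DT}_4^3$ and $\mathcal{DT}_4^{16}$ respectively), so $\dim C$ ranges through the integers from $1$ to $5$.

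There is no serious mathematical obstacle here: the statement is essentially a summary of tables that have already been produced. The only point requiring genuine care is making sure that no isomorphism class has been overlooked and that each listed centroid dimension has been extracted correctly from the linear system (\ref{eqc1})--(\ref{eqc3}); this is precisely what the case-by-case argument sketched in the proof of Theorem \ref{theoc2}, together with the computer-algebra cross-check mentioned in the introduction, is meant to secure. Exhibiting one representative attaining each extremal value then completes the proof.
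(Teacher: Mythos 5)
Your proposal is correct and matches the paper's (implicit) argument exactly: the corollary is stated without a separate proof because it follows by reading off the minimum and maximum of the dimension columns in Theorems \ref{theoc1} and \ref{theoc2}, which is precisely what you do. Your extraction of the extremal and intermediate values ($1$ and $3$ in dimension three; $1$ through $5$ in dimension four, with $5$ attained by $\mathcal{DT}_4^{20}$ and $\mathcal{DT}_4^{21}$) agrees with the tables.
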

\subsection{Classification of Quasi-Centroid}
\begin{defn}
Let $(\E, \prec,\succ, \vee)$ be a  tridendriform algebra. A linear map
 $\psi : \mathcal{E}\rightarrow \mathcal{E}$ is called an element of \textbf{Quasi-Centroids} on $\mathcal{E}$, if for all $x, y\in \mathcal{E}$
\begin{eqnarray}
\psi(x)\prec y&=&x\prec \psi(y),\\ 
 \psi(x)\succ y&=&x\succ \psi(y),\\
 \psi(x)\vee y&=&x\vee\psi(y).
\end{eqnarray}
The \textbf{ Quasi-centroid} of $\mathcal{E}$ is denoted $QC(\mathcal{E})$. 
 \end{defn}
This section describes the quasi-centroid of tridendriform algebra with dimension $\leq 4$ over the field $\mathbb{F}.$ 
Let $\left\{e_1,e_2, e_3,\cdots, e_n\right\}$ be a basis of an $n$-dimensional tridendriform algebras $\mathcal{\E}.$ The product of basis is expressed in terms of structure constants as follows
\begin{eqnarray}
\psi(e_i)=\sum_{j=1}^na_{ji}e_j\nonumber.
\end{eqnarray}
By using the above definition of $\psi$ and basis multiplication of tridendriform algebra, we get the following equations of structure constants :

\begin{eqnarray}\label{eqQC}
\sum_{p=1}^na_{pi}\gamma_{pj}^q=\sum_{p=1}^na_{pj}\gamma_{ip}^q,\quad 
\sum_{p=1}^na_{pi}\delta_{pj}^q=\sum_{p=1}^na_{pj}\delta_{ip}^q,\quad
\sum_{p=1}^na_{pi}\xi_{pj}^q=\sum_{p=1}^na_{pj}\xi_{ip}^q.
\end{eqnarray}

\begin{thm}\label{theoQC}
The \textbf{Quasi-centroid} of 
$3$-dimensional tridendriform algebras 
has the following form :\\

\begin{tabular}{||c||c||c||c||c||c||c||c||c||c||c||c||}
\hline
IC&$QC(\mathcal{DT}_m^n)$ &$Dim(QC)$&IC&$QC(\mathcal{DT}_m^n)$&$Dim(QC)$\\
			\hline
$\mathcal{DT}_3^1$&
$\left(\begin{array}{cccc}
a_{11}&0&0\\
0&a_{11}&0\\
0&0&a_{11}
\end{array}
\right)$
&
3
&
$\mathcal{DT}_3^{2}$&
$\left(\begin{array}{cccc}
a_{11}&0&0\\
a_{21}&a_{22}&a_{23}\\
0&0&a_{11}
\end{array}
\right)$
&
4
\\ \hline
\end{tabular}

\begin{tabular}{||c||c||c||c||c||c||c||c||c||c||c||c||}
\hline
IC&$C(\mathcal{DT}_m^n)$ &$Dim(C)$&IC&$C(\mathcal{DT}_m^n)$&$Dim(C)$\\
			\hline
$\mathcal{DT}_3^3$&
$\left(\begin{array}{cccc}
a_{11}&0&0\\
0&a_{11}&0\\
a_{31}&a_{32}&a_{33}
\end{array}
\right)$
&
4
&
$\mathcal{DT}_3^{4}$&
$\left(\begin{array}{cccc}
a_{11}&a_{21}&0\\
0&a_{11}&0\\
a_{31}&a_{32}&a_{33}
\end{array}
\right)$
&
5
\\ \hline
$\mathcal{DT}_3^5$&
$\left(\begin{array}{cccc}
a_{11}&0&0\\
0&a_{11}&0\\
a_{31}&a_{32}&a_{33}
\end{array}
\right)$
&
4
&
$\mathcal{DT}_3^{6}$&
$\left(\begin{array}{cccc}
a_{11}&0&0\\
0&a_{11}&0\\
a_{31}&a_{32}&a_{33}
\end{array}
\right)$
&
4
\\ \hline
\end{tabular}
\end{thm}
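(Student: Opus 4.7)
The plan is to mimic the approach already used in the proofs of the centroid theorems, adjusting for the weaker defining identities. For each of the six pairwise non-isomorphic tridendriform algebras $\mathcal{DT}_3^1,\ldots,\mathcal{DT}_3^6$ from Theorem 3.1, I would write a generic linear endomorphism $\psi(e_i)=\sum_{j=1}^{3}a_{ji}e_j$ with nine unknown scalars $a_{ji}$, and substitute this ansatz into the three quasi-centroid identities $\psi(x)\prec y=x\prec\psi(y)$, $\psi(x)\succ y=x\succ\psi(y)$, $\psi(x)\vee y=x\vee\psi(y)$, evaluated at every ordered pair $(e_i,e_j)$ of basis elements. This is precisely the specialisation of the structure-constant equations in equation (\ref{eqQC}) to the multiplication tables of the six algebras.

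Expanding both sides of each identity using the given multiplication tables and comparing coefficients of the basis vectors yields a homogeneous linear system in the $a_{ji}$. Row-reducing this system produces the explicit matrix form asserted in the theorem, and the dimension of the solution space is the stated $Dim(QC)$. Since the quasi-centroid conditions strip away the middle relation $\psi(x\circ y)=\psi(x)\circ y$ that distinguishes the centroid, one expects $C(\mathcal{DT}_3^k)\subseteq QC(\mathcal{DT}_3^k)$ with the inclusion typically strict; this is consistent with the numerical comparison between Theorem 4.3 and the present result (e.g.\ the jump from $Dim(C(\mathcal{DT}_3^2))=1$ to $Dim(QC(\mathcal{DT}_3^2))=4$ is explained by the newly permitted off-diagonal entries in the second row).

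To illustrate, for $\mathcal{DT}_3^1$ the nontrivial identities all concentrate in the $e_2$-direction (since every product lands in $\mathrm{span}(e_2)$), so one obtains relations among the coefficients $a_{1i}$ and $a_{3i}$ while the middle row entries $a_{21},a_{22},a_{23}$ appear only through vanishing brackets and hence remain free. The other five cases follow the same pattern, with the parameters $a,b,c,d$ appearing in $\mathcal{DT}_3^3$ and $\mathcal{DT}_3^6$ threading through the linear system linearly.

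The main obstacle is precisely handling those two parametric families: one must verify that the claimed matrix form is valid for \emph{generic} parameter values and explicitly note (or exclude) the exceptional specialisations at which some equations degenerate and the quasi-centroid jumps in dimension. A secondary difficulty is bookkeeping, since each algebra produces up to $27$ scalar equations (nine pairs times three products); as indicated in the introduction, this computation is most safely carried out by computer algebra with a hand-check on a simple base case such as $\mathcal{DT}_3^1$ to validate the script.
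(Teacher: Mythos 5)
Your proposal follows essentially the same route as the paper's proof: substitute a generic matrix $\psi(e_i)=\sum_j a_{ji}e_j$ into the quasi-centroid identities, specialize the structure-constant system (\ref{eqQC}) to each of the six multiplication tables, and read off the solution space and its dimension, with one case worked explicitly and the rest delegated to analogous (computer-assisted) computation. Your added cautions are sound --- indeed your observation that for $\mathcal{DT}_3^1$ the entries $a_{21},a_{22},a_{23}$ never enter any equation (all products involving $e_2$ vanish) is worth pursuing, since it conflicts with the paper's own worked example, which forces $a_{21}=a_{23}=0$ and $a_{22}=a_{33}$ and reports dimension $1$ against the tabulated value $3$.
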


	\begin{proof}
Let us consider $\mathcal{DT}_3^{1}$.
Applying the systems of equations  (\ref{eqQC}), we get
$a_{12}=a_{13}=a_{21}=a_{23}=a_{31}=a_{32}=0$ and $a_{22}=a_{33}$. 
Hence, the quasi-centroids of $\mathcal{DT}_3^{1}$ are indicated as follows\\
$$a_1=\left(\begin{array}{cccc}
1&0&0\\
0&1&0\\
0&0&1
\end{array}
\right)$$
is the basis of $Der(QC)$ and  Dim$Der(QC)=1.$ The quoisi-centroids of the remaining parts of dimension three tridendriform algebras can be handled in a
similar manner as illustrated above.
\end{proof}
\begin{thm}\label{theoQC2}
The \textbf{Quasi-centroid}  of $4$-dimensional tridendriform algebras have the 
following form:\\

\begin{tabular}{||c||c||c||c||c||c||c||c||c||c||c||c||}
\hline
IC&Der$(QC)$ &$Dim(QC)$&IC&Der$(QC)$&$Dim(QC)$\\
			\hline
$\mathcal{DT}_4^1$&
$\left(\begin{array}{cccc}
a_{11}&0&0&0\\
0&a_{11}&0&0\\
0&0&a_{11}&0\\
0&0&0&a_{11}
\end{array}
\right)$
&
1
&
$\mathcal{DT}_4^{2}$&
$\left(\begin{array}{cccc}
a_{11}&a_{12}&0&0\\
0&0&0&0\\
a_{31}&a_{32}&a_{33}&a_{34}\\
0&0&0&0
\end{array}
\right)$
&
6
\\ \hline
$\mathcal{DT}_4^3$&
$\left(\begin{array}{cccc}
a_{11}&0&0&0\\
a_{21}&a_{11}+a_{21}&0&0\\
a_{31}&a_{32}&a_{33}&a_{34}\\
a_{41}&a_{42}&a_{43}&a_{44}\\
\end{array}
\right)$
&
10
&
$\mathcal{DT}_4^{4}$&
$\left(\begin{array}{cccc}
a_{11}&a_{21}&0&0\\
a_{21}&a_{11}+a_{21}&0&0\\
a_{31}&a_{32}&a_{33}&a_{34}\\
a_{41}&a_{42}&a_{43}&a_{44}\\
\end{array}
\right)$
&
10
\\ \hline
$\mathcal{DT}_4^5$&
$\left(\begin{array}{cccc}
a_{11}&0&0&0\\
a_{21}&a_{22}&a_{23}&a_{24}\\
0&0&a_{11}&0\\
a_{41}&a_{42}&a_{43}&a_{44}\\
\end{array}
\right)$
&
9
&
$\mathcal{DT}_4^{6}$&
$\left(\begin{array}{cccc}
a_{11}&0&0&0\\
a_{21}&a_{22}&a_{23}&a_{24}\\
0&0&a_{11}&0\\
a_{41}&a_{42}&a_{43}&a_{44}\\
\end{array}
\right)$
&
9
\\ \hline
$\mathcal{DT}_4^7$&
$\left(\begin{array}{cccc}
a_{11}&0&0&0\\
a_{21}&a_{22}&a_{23}&a_{24}\\
0&0&a_{11}&0\\
a_{41}&a_{42}&a_{43}&a_{44}\\
\end{array}
\right)$
&
9
&
$\mathcal{DT}_4^{8}$&
$\left(\begin{array}{cccc}
a_{11}&0&0&0\\
a_{21}&a_{22}&a_{23}&a_{24}\\
0&0&a_{11}&0\\
a_{41}&a_{42}&a_{43}&a_{44}\\
\end{array}
\right)$
&
9
\\ \hline
$\mathcal{DT}_4^9$&
$\left(\begin{array}{cccc}
a_{11}&0&0&0\\
a_{21}&2a_{22}&a_{23}&a_{24}\\
0&0&a_{11}&0\\
0&a_{42}&a_{43}&a_{44}\\
\end{array}
\right)$
&
8
&
$\mathcal{DT}_4^{10}$&
$\left(\begin{array}{cccc}
a_{11}&0&0&0\\
a_{21}&a_{22}&a_{23}&a_{24}\\
0&0&a_{11}&0\\
0&a_{42}&a_{43}&a_{44}\\
\end{array}
\right)$
&
8
\\ \hline
$\mathcal{DT}_4^{11}$&
$\left(\begin{array}{cccc}
a_{11}&a_{12}&0&0\\
a_{21}&a_{11}&a_{23}&a_{24}\\
a_{31}&0&a_{33}&0\\
0&a_{42}&a_{43}&a_{44}\\
\end{array}
\right)$
&
10
&
$\mathcal{DT}_4^{12}$&
$\left(\begin{array}{cccc}
a_{11}&0&0&0\\
0&a_{11}&0&0\\
0&0&a_{11}&0\\
a_{41}&a_{42}&a_{43}&a_{44}
\end{array}
\right)$
&
5
\\ \hline
\end{tabular}

\begin{tabular}{||c||c||c||c||c||c||c||c||c||c||c||c||}
\hline
IC&Der$(QC)$ &$Dim(QC)$&IC&Der$(QC)$&$Dim(QC)$\\
			\hline
$\mathcal{DT}_4^{13}$&
$\left(\begin{array}{cccc}
a_{11}&0&0&0\\
0&a_{11}&0&0\\
0&0&a_{11}&0\\
a_{41}&a_{42}&a_{43}&a_{44}
\end{array}
\right)$
&
5
&
$\mathcal{DT}_4^{14}$&
$\left(\begin{array}{cccc}
a_{11}&0&0&0\\
0&a_{11}&0&0\\
0&0&a_{11}&0\\
a_{41}&a_{42}&a_{43}&a_{44}
\end{array}
\right)$
&
5
\\ \hline
$\mathcal{DT}_4^{15}$&
$\left(\begin{array}{cccc}
a_{11}&0&0&0\\
0&a_{11}&0&0\\
0&0&a_{11}&0\\
a_{41}&a_{42}&a_{43}&a_{44}
\end{array}
\right)$
&
5
&
$\mathcal{DT}_4^{16}$&
$\left(\begin{array}{cccc}
a_{11}&a_{12}&a_{13}&a_{14}\\
0&a_{22}&a_{22}&a_{22}\\
0&0&0&0\\
0&0&0&0
\end{array}
\right)$
&
5
\\ \hline
$\mathcal{DT}_4^{17}$&
$\left(\begin{array}{cccc}
a_{11}&a_{12}&a_{13}&a_{14}\\
0&a_{22}&a_{22}&a_{22}\\
0&0&0&0\\
0&0&0&0
\end{array}
\right)$
&
5
&
$\mathcal{DT}_4^{18}$&
$\left(\begin{array}{cccc}
a_{11}&a_{12}&a_{13}&a_{14}\\
0&a_{22}&a_{22}&a_{22}\\
0&0&0&0\\
0&0&0&0
\end{array}
\right)$
&
5
\\ \hline
$\mathcal{DT}_4^{19}$&
$\left(\begin{array}{cccc}
a_{11}&a_{12}&a_{13}&a_{14}\\
0&a_{22}&a_{22}&a_{22}\\
0&0&0&0\\
0&0&0&0
\end{array}
\right)$
&
5
&
$\mathcal{DT}_4^{20}$&
$\left(\begin{array}{cccc}
0&0&0&0\\
a_{21}&a_{22}&a_{23}&a_{24}\\
0&0&0&0\\
a_{41}&a_{42}&a_{43}&a_{44}\\
\end{array}
\right)$
&
8
\\ \hline
$\mathcal{DT}_4^{21}$&
$\left(\begin{array}{cccc}
0&0&0&0\\
a_{21}&a_{22}&a_{23}&a_{24}\\
0&0&0&0\\
a_{31}&a_{32}&a_{33}&a_{34}\\
0&0&0&0
\end{array}
\right)$
&
8
&
&
&
\\ \hline
\end{tabular}
\end{thm}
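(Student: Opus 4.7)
The approach is to carry out the same procedure as in Theorem \ref{theoQC} for each of the $21$ isomorphism classes $\mathcal{DT}_4^{n}$ produced by the classification theorem for $4$-dimensional tridendriform algebras. For every fixed class I treat the structure constants $\gamma_{ij}^k$, $\delta_{ij}^k$, $\xi_{ij}^k$ as known from the multiplication tables given earlier, and introduce the generic linear map $\psi(e_i) = \sum_{j=1}^{4} a_{ji} e_j$ with $16$ unknown scalars. Substituting into the defining identities for the quasi-centroid produces exactly the system \eqref{eqQC}, a finite homogeneous linear system in the $a_{ji}$ whose solution space is $QC(\mathcal{DT}_4^{n})$; the statement of the theorem is then extracted by row-reducing this system and reading off the free parameters.

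For each class I would proceed in three steps. First, expand the three families of identities in \eqref{eqQC} over all indices $1 \le i,j,q \le 4$ using the multiplication table of $\mathcal{DT}_4^{n}$; only pairs $(i,j)$ such that at least one of $e_i \prec e_j$, $e_i \succ e_j$, $e_i \vee e_j$ is nonzero contribute nontrivially. Second, collect and simplify the resulting scalar equations, discarding trivialities, to obtain a homogeneous linear system in the $a_{ji}$. Third, row-reduce this system, identify which entries are forced to vanish and which linear dependences the surviving entries satisfy, and thereby recover the displayed matrix shape and the dimension as the number of free parameters.

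To illustrate, take $\mathcal{DT}_4^{1}$, whose only nonzero products are $e_1 \prec e_1 = e_1$, $e_1 \succ e_2 = e_2$ and the four middle products $e_3 \vee e_1 = e_3 \vee e_3 = e_3 \vee e_4 = e_4 \vee e_4 = e_3 + e_4$. Applying the identity $\psi(e_3) \vee e_3 = e_3 \vee \psi(e_3)$, the left-hand side equals $\sum_p a_{p3}(e_p \vee e_3)$ and the right-hand side equals $\sum_p a_{p3}(e_3 \vee e_p)$; expanding both with the table and equating coefficients of $e_3$ and $e_4$ produces relations among the $a_{ij}$. Iterating over all nonzero products and all three operations yields the constraints $a_{ij} = 0$ for $i \ne j$ and $a_{22} = a_{33} = a_{44} = a_{11}$, leaving the single free parameter $a_{11}$, which matches the first entry of the table and gives $\dim QC(\mathcal{DT}_4^{1}) = 1$.

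The main difficulty is not conceptual but combinatorial: each of the $21$ cases potentially contributes up to $3\cdot 4^3 = 192$ scalar equations, of which the vast majority are trivial, and isolating the genuine kernel by hand is error-prone, particularly for the denser classes such as $\mathcal{DT}_4^{3}$, $\mathcal{DT}_4^{4}$ and $\mathcal{DT}_4^{11}$, where the quasi-centroid reaches dimension $10$. As announced in the introduction, I would perform the actual row reduction with the Mathematica \texttt{Algebra} package already used for the classification results and for Theorem \ref{theoC1}, and then cross-check the machine output by computing a few representative identities of each class by hand, so that the matrix shape and the numerical dimension recorded in the table can be trusted.
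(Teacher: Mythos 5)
Your proposal follows essentially the same route as the paper: set up the generic matrix of $\psi$ in the fixed basis, impose the quasi-centroid identities via the linear system \eqref{eqQC} for each of the $21$ classes, solve for the surviving entries, and read off the dimension, with the worked example $\mathcal{DT}_4^{1}$ coinciding with the one the paper itself presents (all off-diagonal entries vanish, the diagonal entries are equal, dimension $1$). The reliance on computer algebra for the denser cases also matches the paper's stated methodology, so there is nothing substantive to add.
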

	\begin{proof}	
Consider that $\mathcal{DT}_4^{1}$.
Applying the systems of equations (\ref{eqQC}), we get
$a_{12}=a_{13}=a_{14}=a_{21}=a_{23}=a_{24}=a_{31}=a_{32}=a_{34}=a_{41}=a_{42}=a_{43}=0 $ and $a_{11}=a_{22}=a_{33}=a_{44}$. Hence, the quasi-centroids of $\mathcal{DT}_4^{1}$ are indicated as follows\\
$$a_1=\left(\begin{array}{ccccc}
1&0&0&0\\
0&1&0&0\\
0&0&1&0\\
0&0&0&1
\end{array}
\right)$$
is the basis of $ QC(\mathcal{DT}_4^{1})$ and $Dim(QC)=1.$ The quasi-centroid of the remaining parts of dimension four tridendriform algebras can be handled similarly, as illustrated above. \end{proof}
\begin{cor}\,
\begin{itemize}
	\item The dimensions of the quasi-centroid of $3$-dimensional tridendriform algebras range between $3$ and $5$.
	\item The dimensions of the quasi-centroid of $4$-dimensional tridendriform algebras range between $1$ and $10$.
	\end{itemize}
\end{cor}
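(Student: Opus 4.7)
The plan is to obtain both range statements as immediate consequences of the exhaustive quasi-centroid computations carried out in Theorem \ref{theoQC} (for dimension three) and Theorem \ref{theoQC2} (for dimension four). Since the corollary only asserts upper and lower bounds on $\dim QC$, no new structural argument is needed; the work reduces to locating the extremes in the two tables and certifying that every class actually lies in the claimed interval.

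For the first bullet I would list the six dimensions appearing in Theorem \ref{theoQC}, namely the values $3,4,4,5,4,4$ attached to $\mathcal{DT}_3^1,\dots,\mathcal{DT}_3^6$ in order. The minimum $3$ is realised by $\mathcal{DT}_3^1$ and the maximum $5$ by $\mathcal{DT}_3^4$, so every entry is confined to $\{3,4,5\}\subseteq[3,5]$. For the second bullet I would do the same with Theorem \ref{theoQC2}: $\mathcal{DT}_4^1$ contributes $1$ (the minimum, corresponding to the scalar family), while $\mathcal{DT}_4^3$, $\mathcal{DT}_4^4$, and $\mathcal{DT}_4^{11}$ each contribute $10$ (the maximum); the remaining classes contribute values in $\{5,6,8,9\}$, all of which lie in $[1,10]$.

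The only genuine point to guard against is an arithmetic slip in the underlying tables. To insure against this I would re-derive the extremal dimensions directly from the system (\ref{eqQC}): for $\mathcal{DT}_4^1$ the products among $\{e_1,\dots,e_4\}$ collapse the sixteen unknown entries of the matrix of $\psi$ down to a single scalar parameter, yielding dimension $1$; and for $\mathcal{DT}_4^3$ (or $\mathcal{DT}_4^{11}$) I would verify by a short linear-algebra calculation that exactly ten parameters survive. A parallel check on $\mathcal{DT}_3^1$ and $\mathcal{DT}_3^4$ secures the three-dimensional extrema. This sanity pass is the closest thing to an obstacle, but it is routine given the explicit multiplication tables that define each isomorphism class. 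Once the four extremal counts are confirmed, the corollary follows at once by taking minima and maxima of the tabulated integers.
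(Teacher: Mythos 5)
Your proposal is correct and coincides with the paper's treatment: the corollary is stated as an immediate consequence of the tables in Theorems \ref{theoQC} and \ref{theoQC2}, and your reading of the extremal entries ($3$ for $\mathcal{DT}_3^1$, $5$ for $\mathcal{DT}_3^4$, $1$ for $\mathcal{DT}_4^1$, $10$ for $\mathcal{DT}_4^3$, $\mathcal{DT}_4^4$, $\mathcal{DT}_4^{11}$) matches the tabulated values. The additional sanity check you propose, re-deriving the extremal dimensions from the system (\ref{eqQC}), goes slightly beyond what the paper records but follows the same computational method used in its proofs of those theorems.
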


\end{document}